\newtheorem{theorem}{Theorem}[section]
\newtheorem{lemma}[theorem]{Lemma}
\newtheorem{corollary}[theorem]{Corollary}
\newtheorem{proposition}[theorem]{Proposition}
\theoremstyle{remark}
\newtheorem{remark}[theorem]{Remark}
\theoremstyle{definition}
\newtheorem{definition}[theorem]{Definition}
\numberwithin{equation}{section}
\newcommand{\R}{\mathbb{R}}
\renewcommand{\H}{\mathbb{H}}
\newcommand{\N}{\mathbb{N}}
\newcommand{\e}{\varepsilon}
\newcommand{\p}{\varphi}
\newcommand{\F}{\mathcal{F}}
\newcommand{\id}{\mathrm{id}}
\newcommand{\Lip}{\mathrm{Lip}}
\renewcommand{\leq}{\leqslant}
\renewcommand{\geq}{\geqslant}
\newcommand{\arcosh}{\mathrm{arcosh}}
\newcommand{\diam}{\operatorname{diam}}
\newcommand{\dist}{\operatorname{dist}}
\newcommand{\wtow}{weak$^*$-to-weak$^*$ }
\newcommand*{\mathbbb}{\text{\usefont{U}{bbold}{m}{n}1}}
\begin{document}
\title[Tilings of the hyperbolic space and Lipschitz functions]{Tilings of the hyperbolic space\\and Lipschitz functions}
\author{Christian Bargetz \and Franz Luggin \and Tommaso Russo}
\address{Universit\"{a}t Innsbruck, Department of Mathematics, Technikerstra\ss e 13, 6020 Innsbruck, Austria}
\email{Christian.Bargetz@uibk.ac.at}
\email{Franz.Luggin@student.uibk.ac.at}
\email{Tommaso.Russo@uibk.ac.at, tommaso.russo.math@gmail.com}

\keywords{Lipschitz-free space, Banach space of Lipschitz functions, Hyperbolic $d$-space, tiling, polytope, linear extension operator, Schauder basis}
\subjclass[2020]{46B03, 51M10 (primary), and 46B20, 46E15, 26B35, 20F55 (secondary)}
\date{\today}

\begin{abstract} We use a special tiling for the hyperbolic $d$-space $\mathbb{H}^d$ for $d=2,3,4$ to construct an (almost) explicit isomorphism between the Lipschitz-free space $\mathcal{F}(\mathbb{H}^d)$ and $\mathcal{F}(P)\oplus \mathcal{F}(\mathcal{N})$ where $P$ is a polytope in $\mathbb{R}^d$ and $\mathcal{N}$ a net in $\mathbb{H}^d$ coming from the tiling. This implies that the spaces $\mathcal{F}(\mathbb{H}^d)$ and $\mathcal{F}(\mathbb{R}^d)\oplus \mathcal{F}(\mathcal{M})$ are isomorphic for every net $\mathcal{M}$ in $\mathbb{H}^d$. In particular, we obtain that, for $d=2,3,4$, $\mathcal{F}(\mathbb{H}^d)$ has a Schauder basis. Moreover, using a similar method, we also give an explicit isomorphism between ${\rm Lip}(\mathbb{H}^d)$ and ${\rm Lip}(\mathbb{R}^d)$.
\end{abstract}
\maketitle

\section{Introduction}
Given a metric space $M$ with a distinguished point $0_M\in M$, the Lipschitz-free space $\mathcal{F}(M)$, together with an isometric mapping $\delta\colon M \to \mathcal{F}(M)$, is the uniquely determined (up to linear isometry) Banach space with the following universal property: for every Lipschitz mapping $f\colon M \to X$ to a Banach space $X$ with $f(0_M)=0$ there is a unique bounded linear operator $F\colon \mathcal{F}(M) \to X$ with $\|F\| = \Lip (f)$ such that the diagram
\begin{center}
\begin{tikzcd}
    \mathcal{F}(M) \arrow{r}{F} & X \\ M \arrow{u}{\delta} \arrow{ru}{f}
\end{tikzcd}
\end{center}
commutes. See, for instance, \cite{GHT-free} for an approach to Lipschitz-free spaces via the universal property. The dual space of $\F(M)$ is the space $\Lip_0(M)$ of Lipschitz functions $f\colon M \to \mathbb{R}$ with $f(0_M)=0$ equipped with the Lipschitz constant as norm, i.e. $\|f\|:=\Lip (f)$. Note that the condition $f(0_M)=0$ eliminates the constant functions and hence ensures that the Lipschitz constant is indeed a norm. 

The name Lipschitz-free spaces was introduced by G.~Godefroy and N.J.~Kalton in~\cite{GK2003} where, among others, these spaces are used to construct canonical examples of non-separable Banach spaces which are bi-Lipschitz equivalent but not linearly isomorphic. Such spaces have been studied by several authors in different contexts and with different terminology and we refer to \cite{Weaver} and \cite[Section 1.6]{OO19} for some terminological and historical remarks. The appearance of \cite{GK2003} resulted in a new impetus to their study, in particular in connection with non-linear functional analysis, metric geometry, and theoretical computer science; let us refer to \cite{AACD, AACD-bounded, AGPP2022, AP3-support, DK2022, FG2022, Veeorg, Weaver-predual} for a non-exhaustive list of some recent results. Let us also refer to \cite{Godefroy-survey} for a recent nice survey on some aspects of the theory of Lipschitz-free spaces. A detailed exposition of the spaces $\Lip_0(M)$ and $\F(M)$ can also be found in N.~Weaver's book~\cite{Weaver}. There, in addition to the Banach space $\Lip_0(M)$, also the Banach space $\Lip(M)$ of bounded real-valued Lipschitz functions is introduced and studied in detail. This space has the additional important property to be a Banach algebra.
\smallskip

In metric geometry, up to dimension and a scaling of the metric, there only exist three model spaces (see e.g. \cite[Chapter~I.2]{BH}): the sphere $\mathbb{S}^d$ with the intrinsic (geodesic) metric, the Euclidean space $\R^d$, and the hyperbolic space $\H^d$. The structure of $\F(\R^d)$ is well-studied, both from the isometric \cite{CKK2016} and from the isomorphic point of view \cite{AACD-Schauder, BM2012, CDW2016, HP2014, PS2015}. Quite recently it was proved that the spaces $\F(\mathbb{S}^d)$ and $\F(\R^d)$ are isomorphic, \cite[Theorem 4.21]{AACD} (see also \cite{FG2022} for a more general result). On the other hand, much less seems to be known about the structure of the space $\F(\H^d)$ and its dual space $\Lip_0(\H^d)$; some results concerning $\F(\H^d)$ can be found in~\cite{DK2022} where, among others, the authors pose the questions of whether $\mathcal{F}(\mathbb{H}^{d})$ has a Schauder basis and whether it is isomorphic to $\mathcal{F}(\mathbb{R}^{d})$. While our work was under review, the preprint \cite{Gartland} by C.~Gartland actually gave a positive answer to the latter question.

The aim of this paper is to explain how the local structure of $\mathbb{H}^d$ for $d=2,3,4$ together with a macroscopic view of $\mathbb{H}^d$ determine the Banach space structure of $\mathcal{F}(\mathbb{H}^d)$ and $\Lip_0(\mathbb{H}^d)$. Since the space $\Lip_0(\mathbb{H}^d)$ is more tangible than $\mathcal{F}(\mathbb{H}^d)$, it will be more convenient for us to work with the space $\Lip_0(\mathbb{H}^d)$ and then transfer the results to the predual. More precisely, we construct an (almost) explicit isomorphism 
\[
\Phi\colon \Lip_0(\mathbb{H}^d) \simeq \Lip_0(P) \oplus \Lip_0(\mathcal{N})
\]
where $P$ is a polytope (with nonempty interior) in $\mathbb{R}^d$ and $\mathcal{N}$ is a suitable net in~$\mathbb{H}^d$. In particular, we build an explicit isomorphism between $\Lip_0(\H^d)$ and $Z \oplus \Lip_0(\mathcal{N})$, where $Z$ is a direct sum of certain subspaces of $\Lip_0(P)$. This gives us an explicit procedure to reduce the study of $\Lip_0(\H^d)$ to the discrete case of $\Lip_0(\mathcal{N})$ and to a space of Lipschitz functions on $\R^d$. Since our main focus is on the hyperbolic case, we allow ourselves to use non-explicit arguments, such as Lee--Naor extension results \cite{LeeNaor} and Pe{\l}czy\'{n}ski decomposition method, in the proof that $Z$ is isomorphic to $\Lip_0(P)$ (in Section~\ref{sec:local result}). Let us however point out that this part of the argument could also be made entirely explicit, by using a variant of the arguments from Section~\ref{sec:main_result}.

Since $\Phi$ is weak$^*$-to-weak$^*$ continuous, it is the adjoint of an isomorphism 
\[
\mathcal{F}(\mathbb{H}^d) \simeq \mathcal{F}(P) \oplus \mathcal{F}(\mathcal{N}).
\]
Moreover, by the results of~\cite{CKK2016} $\mathcal{F}(P)$ is isomorphic to $\mathcal{F}(\mathbb{R}^{d})$, which yields 
\[
\mathcal{F}(\mathbb{H}^d) \simeq \mathcal{F}(\mathbb{R}^{d}) \oplus \mathcal{F}(\mathcal{N}) \qquad \text{and} \qquad \Lip_0(\mathbb{H}^d) \simeq \Lip_0(\mathbb{R}^d) \oplus \Lip_0(\mathcal{N})
\]
for $d=2,3,4$. Since both the space $\F(\R^d)$ and $\F(\mathcal{N})$ have a Schauder basis, the former result being due to P.~H\'ajek and E.~Perneck\'a in \cite{HP2014} and the latter to M.~Doucha and P.~Kaufmann in~\cite{DK2022}, we conclude that, for $d=2,3,4$, $\F(\H^d)$ also admits a Schauder basis, thereby giving a partial positive answer to the first question mentioned above. Let us recall that if one is aiming for weaker structural properties, such as the bounded approximation property, or the $(\pi)$-property, then more general results are available; see, for instance, \cite{GodefroyBJMA, LancienPernecka} and the references therein.

Using the same methods, we also show that the space $\Lip(\mathbb{H}^d)$ is isomorphic to $\Lip(\mathbb{R}^d)$, this time by giving an entirely explicit isomorphism (Remark~\ref{rem:Lip Hd}). Combining this result with standard arguments, in Remark~\ref{rem:Lip Sd} we conclude that 
\[
\Lip(\mathbb{H}^d) \simeq \Lip(\mathbb{R}^d) \simeq \Lip(\mathbb{S}^d),
\]
i.e. the spaces of bounded Lipschitz functions on the model spaces of metric geometry are all isomorphic.

At the core of our argument we have to decompose a Lipschitz function on $\mathbb{H}^d$ into a Lipschitz function on a net $\mathcal{N}$ and a sequence of Lipschitz functions on a convex subset of~$\mathbb{H}^d$. In order to do this, we consider a suitable tiling of $\mathbb{H}^d$ by polytopes; the existence of such tilings depends upon classical results from the theory of reflection groups, see e.g.~\cite[Chapter~6]{Davis} or~\cite[Chapter~5]{VS-GeoII}. More precisely, given a right-angled polytope $P$ (namely, such that all dihedral angles are exactly $\pi/2$), by reflecting across the faces of $P$ we obtain a tessellation of $\mathbb{H}^d$ by isometric copies of $P$. In~\cite{Vinberg} (see also \cite[Section~2]{PV}) the author shows that such right-angled polytopes exist only if $d\leq 4$; explicit constructions in dimensions $d=2,3,4$ were already known to exist (see Section~\ref{ssec:Tiling}). This justifies why we are able to prove our results only in dimension $d=2,3,4$. In order to emphasise the subtlety of hyperbolic tilings results, let us mention two more results: in dimension $d\geq6$ there exists no regular tiling of $\H^d$ \cite[p.~206]{Coxeter-honeycomb} and, more generally, for $d\geq 30$ there are no hyperbolic reflection groups at all (see e.g.~\cite[Theorem~6.11.8]{Davis}).

Given a tiling of $\H^d$ by right-angled polytopes we first use an extension operator from the net, given by choosing a distinguished point inside the polytope, to decompose a $\Lip_0$-function on $\H^d$ into a function on the net and a \emph{bounded} Lipschitz function on $\H^d$. Then, using an extension operator for Lipschitz functions on $P$, we decompose the bounded Lipschitz function in a bounded sequence of Lipschitz functions on $P$. The latter construction is inspired by a decomposition method for ($C^\infty$-)smooth functions on $\R^d$ into sequences of functions on the unit cube in~\cite{Bar14} and~\cite{Bar15}.

Let us close this section with a brief description of the structure of the paper. In Section~\ref{sec:preliminaries} we recall basic notions on Lipschitz-free spaces and metric geometry. A self contained revision of hyperbolic geometry is the content of Section~\ref{sec:hyperbolic}, in particular we explain the properties of the tilings that we need in Section~\ref{ssec:Tiling}. As we mentioned already, Section~\ref{sec:local result} is dedicated to the local problem and we study the space $\Lip_0(P)$, for a polytope $P$ in $\R^d$. Finally, the core of our paper with the proof of the main results is Section~\ref{sec:main_result}.

\section{Preliminary material}\label{sec:preliminaries}
Given a pointed metric space $(M,d)$ with distinguished point $0_M\in M$ we consider the Banach space $\Lip_0(M)$ of all Lipschitz functions $f\colon M\to \R$ such that $f(0_M)=0$, endowed with the norm
\[
\|f\|_{\Lip_0} \coloneqq \Lip(f) \coloneqq \sup \left\{\left| \frac{f(x)-f(y)}{d(x,y)}\right| \colon x\neq y\in M \right\}.
\]
Moreover, when $(M,d)$ is a metric space, we consider the vector space of all bounded Lipschitz functions $f\colon M\to \R$ that, following \cite[Chapter 2]{Weaver}, we denote by $\Lip(M)$. $\Lip(M)$ becomes a Banach space when equipped with the norm
\[
\|f\|_\Lip\coloneqq \|f\|_{\Lip_0}+ \|f\|_\infty.
\]
The pointwise multiplication induces an algebra structure on $\Lip(M)$ due to the basic inequality
\[
\Lip(fg)\leq \Lip(f) \|g\|_\infty + \Lip(g) \|f\|_\infty.
\]
When $M$ is bounded, the same product also gives an algebra structure on $\Lip_0(M)$, because $\|f\|_\infty \leq \diam(M) \|f\|_{\Lip_0}$. Actually, a different product turns every $\Lip_0(M)$ into a Banach algebra, \cite{AACD-bounded}.

For $p\in M$, the evaluation functional $\delta_p\in \Lip_0(M)^*$ is defined by $\langle\delta_p, f\rangle\coloneqq f(p)$. It is easy to see that $\|\delta_p\|= d(p,0_M)$. Then one can define $\F(M)\coloneqq \overline{\rm span}\{\delta_p\colon p\in M\}\subset \Lip_0(M)^*$ and verify that $\F(M)$ satisfies the universal property stated in the Introduction. In particular, $\F(M)^*=\Lip_0(M)$.
\smallskip

As mentioned in the Introduction, our argument will proceed in $\Lip_0(M)$ and only at the very end we will pass to preduals and deduce results for $\F(M)$. Therefore, we need information on the weak$^*$ topology of $\Lip_0(M)$ induced by the predual $\F(M)$. By definition, the set $\{\delta_p\colon p\in M\}$ of elementary molecules is linearly dense in $\F(M)$. Thus, on bounded sets, the weak$^*$ topology coincides with the weak topology induced by the functionals $\{\delta_p \colon p\in M\}$. In other words, it agrees with the topology of pointwise convergence on $M$. When combined with the Banach--Dieudonn\'e theorem, this fact has important consequences. First, a subspace $X\subset \Lip_0(M)$ is weak$^*$ closed if and only if it is pointwise closed. Second, a bounded operator $L\colon \Lip_0(M)\to \Lip_0(N)$ is \wtow continuous if and only if it is pointwise-to-pointwise continuous (see e.g. \cite[Exercise 3.65]{FHHMZ} or \cite[Corollary 2.33]{Weaver}). These facts will be freely used multiple times in our arguments. Moreover, if $X$ is a weak$^*$ closed subspace of $\Lip_0(M)$, then it is the dual to some quotient $Z$ of $\F(M)$ and the weak$^*$ topology of $X$ induced by $Z$ coincides with the restriction to $X$ of the weak$^*$ topology of $\Lip_0(M)$, see e.g. \cite[Corollary V.2.2]{Conway}.

An ubiquitous role in our proofs will be played by linear extension operators. If $N$ is a subset of $M$ with $0_M\in N$, a \emph{linear extension operator} $E\colon \Lip_0(N)\to \Lip_0(M)$ is a linear operator such that $Ef$ is an extension of $f$ for every $f\in \Lip_0(N)$. Plainly, the restriction operator $f\mapsto f|_N$ is a left-inverse to $E$. Thus, $E$ defines an isomorphic embedding of $\Lip_0(N)$ as a complemented subspace of $\Lip_0(M)$. Moreover, if $E$ is pointwise-to-pointwise continuous, both the isomorphic embedding and the projection are \wtow continuous. The construction of linear extension operators for Lipschitz functions has been a topic of great interest in recent years, see e.g. \cite{AACD, AP-extension, Brudnyi, BMS, Godefroy-Extensions, JLS, LeeNaor}.

In most cases, our extensions operators will be based on direct and explicit constructions. However, in one place we shall use the following deep extension result due to Lee and Naor \cite{LeeNaor}: there is a universal constant $C$ such that for every pointed metric space $(M,d)$ and every subspace $N$ with $0_M\in N$ that is $\lambda$-doubling there is a linear extension operator $E\colon \Lip_0(N)\to \Lip_0(M)$ with $\|E\|\leq C\log(\lambda)$. For a more recent simpler proof see \cite[Theorem 4.1]{BMS}; importantly, the operator $E$ is also \wtow continuous, as it is explained for example in \cite{LancienPernecka}.
\smallskip

Next, we need to mention some basics on metric geometry. Recall that a metric space $(M,d)$ is \emph{geodesic} if for every two points $x,y\in M$ there is an isometry $\gamma\colon [0,d(x,y)]\to M$ such that $\gamma(0)=x$ and $\gamma(d(x,y))=y$. In case such an isometry $\gamma$ is unique, $(M,d)$ is said to be \emph{uniquely geodesic}. Intuitively speaking, geodesics are the metric analogue of segments. Accordingly, the image $\gamma([0,d(x,y)])$ of a geodesic connecting $x$ to $y$ is sometimes called \emph{metric segment} and denoted $[xy]$. Moreover, we say that a subset $C$ of $M$ is \emph{(geodesically) convex} if every metric segment $[xy]$ with $x,y\in C$ is entirely contained in $C$. Let us refer e.g. to \cite{BH} for more on geodesic metric spaces.

An important property of geodesic metric spaces is that the Lipschitz condition becomes a local property, in the sense of the following lemma. For convex subsets of Banach spaces it is due to D.J.~Ives and D.~Preiss in~\cite{IP}.

\begin{lemma}[{\cite[Lemma~2.1]{BRT}}]\label{LipschitzLemma}
    Let $M$ be a geodesic space, $N$ a metric space, and $\{Z_i\}_{i=1}^{\infty}$ a countable family of sets covering $M$. Let $f\colon M\to N$ be a continuous mapping whose restrictions to the sets $Z_i$ are Lipschitz and satisfy $\Lip(f|_{Z_i})\leq L$ for some $L>0$. Then $f$ is Lipschitz with $\Lip(f) \leq L$.
\end{lemma}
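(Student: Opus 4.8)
The plan is to prove that $f$ is Lipschitz with constant at most $L$ by reducing the global Lipschitz estimate to a local one along geodesics, exploiting the fact that on a geodesic the restriction of $f$ to each $Z_i$ controls its oscillation on small subsegments. Fix two distinct points $x,y \in M$ and a geodesic $\gamma\colon [0,d(x,y)] \to M$ joining them. It suffices to show that $|f(x)-f(y)| \leq L\, d(x,y)$, and since $\gamma$ is an isometric embedding of an interval, the problem becomes a one-dimensional one: I want to estimate the oscillation of the composition $g \coloneqq f\circ\gamma\colon [0,\ell]\to N$ (where $\ell = d(x,y)$) over the whole interval, knowing that $g$ is continuous and that for each $i$ the map $f$ is $L$-Lipschitz on $Z_i$.

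The key step is a covering-and-chaining argument on the interval $[0,\ell]$. The preimages $\gamma^{-1}(Z_i)$ form a countable cover of $[0,\ell]$ by arbitrary sets, so I cannot directly use connectedness; instead I would fatten them to an open cover. The idea is that on a subsegment $\gamma([s,t])$ contained in a single $Z_i$, continuity and the $L$-Lipschitz bound on $Z_i$ give $d_N(f(\gamma(s)),f(\gamma(t))) \leq L\,|t-s|$, because $\gamma(s),\gamma(t)\in Z_i$ and $\gamma$ is an isometry so $d(\gamma(s),\gamma(t)) = |t-s|$. The heart of the matter is to show that an \emph{arbitrary} pair of endpoints can be connected by finitely many such ``single-tile'' steps whose lengths telescope to at most $\ell$. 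I would make this precise by fixing $\e>0$ and considering, for each point $u\in[0,\ell]$, the maximal $\delta_u>0$ such that $f$ maps $\gamma((u-\delta_u,u+\delta_u)\cap[0,\ell])$ into a set of $N$-diameter at most $2L\delta_u + \e$, which exists by continuity; a compactness argument (Lebesgue number or a Vitali-type extraction) then produces a finite chain $0=u_0<u_1<\cdots<u_n=\ell$ with consecutive points close enough that each step is essentially governed by the Lipschitz bound on a single $Z_i$ up to an $\e$-error.

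Concretely, the cleanest route is probably the following continuity-bootstrapping lemma: define
\[
A \coloneqq \left\{ t\in[0,\ell] \colon d_N\bigl(f(\gamma(0)),f(\gamma(t))\bigr) \leq L\,t + \e\,t \right\},
\]
show $0\in A$, that $A$ is closed by continuity of $g$, and that $A$ is open in $[0,\ell]$ (the bootstrap step: if $t\in A$ then a short segment beyond $t$ lies, together with $\gamma(t)$, in some $Z_i$, so the $L$-Lipschitz bound on $Z_i$ extends the estimate past $t$ up to the $\e$-slack). By connectedness of $[0,\ell]$ this forces $A=[0,\ell]$, and letting $\e\to 0$ yields $d_N(f(x),f(y))\leq L\,\ell = L\,d(x,y)$. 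The delicate point — and the main obstacle — is the openness of $A$: one must guarantee that for $t\in A$ there is a genuinely positive radius on which $\gamma(t)$ and all nearby points share a common tile $Z_i$, which is \emph{not} automatic since the cover is arbitrary and a point may lie only on the boundary of every tile containing nearby points. I would resolve this by using continuity of $g$ together with the membership $\gamma(t)\in Z_{i_0}$ for some $i_0$: since $\gamma(t)\in Z_{i_0}$, for points $\gamma(t')$ with $t'>t$ one pairs $\gamma(t')$ directly with $\gamma(t)$ \emph{only when} $\gamma(t')\in Z_{i_0}$ as well; to handle the generic case one instead chains through an intermediate point, absorbing the mismatch into the $\e$-term via the continuity modulus of $g$ at $t$. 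This is exactly where the continuity hypothesis on $f$ (equivalently $g$) is indispensable, and it is why the slack parameter $\e$ is threaded through the definition of $A$ rather than aiming for the sharp bound directly.
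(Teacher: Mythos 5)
The paper itself does not prove this lemma --- it quotes it from \cite[Lemma~2.1]{BRT}, whose argument goes back to Ives--Preiss \cite{IP} --- so your attempt has to stand on its own, and it does not. Your reduction to a one-dimensional statement along a geodesic is correct (and is also the first step of the cited proof): with $g=f\circ\gamma$ one only needs $d_N(g(0),g(\ell))\leq L\ell$. The gap is exactly at the point you flagged, and your proposed repair does not close it. To show openness of $A=\{t : d_N(g(0),g(t))\leq (L+\e)t\}$ at a point $t$, you may only spend an extra $(L+\e)(t'-t)$, i.e.\ an error \emph{linear in the increment}; but continuity of $g$ at $t$ gives $d_N(g(t),g(t'))\to 0$ with no rate whatsoever, so ``absorbing the mismatch into the $\e$-term via the continuity modulus'' is not possible --- that would require a Lipschitz-type estimate at $t$, which is what you are trying to prove. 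Chaining through intermediate points does not help: each use of bare continuity costs an additive error independent of the step length, and the number of steps blows up as the steps shrink. The same objection defeats the Lebesgue-number variant in your second paragraph: fattening the sets $Z_i$ to open sets destroys the Lipschitz bound on them, and a chain of $n(\e)$ steps each carrying an additive error $\e$ has total error $n(\e)\,\e$, over which you have no control.

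There is a structural reason why no repair along these lines can succeed: your argument never uses the \emph{countability} of the cover, yet the statement is false for uncountable covers. Cover $M$ by singletons: every function is $L$-Lipschitz on each singleton, but a continuous function need not be Lipschitz (e.g.\ $t\mapsto\sqrt{t}$ on $[0,1]$); in this example your set $A$ is closed and contains $0$ but openness fails at $t=0$. So the bootstrap step is genuinely false, not merely delicate, and countability must enter essentially. The standard argument (as in the cited reference) does this via a pigeonhole on level sets: put $h(t)=d_N(g(0),g(t))$ and $u(t)=h(t)-Lt$; then $u$ is continuous and non-increasing on each set $\gamma^{-1}(Z_i)$, since $|h(t)-h(s)|\leq d_N(g(s),g(t))\leq L|t-s|$ there. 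If $u(\ell)>u(0)$, then for each level $c\in(u(0),u(\ell))$ the last crossing time $\tau(c)=\max u^{-1}(c)$ exists (continuity plus compactness), $u>c$ strictly after $\tau(c)$, and $\tau$ is strictly increasing in $c$. There are uncountably many levels but only countably many sets $\gamma^{-1}(Z_i)$, so two last-crossing times $\tau(c)<\tau(c')$ with $c<c'$ lie in the same $\gamma^{-1}(Z_{i_0})$; monotonicity of $u$ on that set gives $c'=u(\tau(c'))\leq u(\tau(c))=c$, a contradiction. Hence $u(\ell)\leq u(0)$, which is the desired bound.
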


\section{Hyperbolic geometry}\label{sec:hyperbolic}
This section is dedicated to a brief introduction to the \emph{hyperbolic $d$-space} $\H^d$. The shortest way to introduce it is to define $\H^d$ as the unique complete, simply-connected Riemannian $d$-manifold with constant sectional curvature $-1$. The uniqueness of a Riemannian $d$-manifold with such properties is a consequence of the Killing--Hopf theorem, see e.g. \cite[Theorem 12.4]{Lee}. However, for our purposes it will be more convenient to have an explicit description of a model for $\H^d$; we shall now recall two such models and later use whichever model is more convenient for our purpose.

Let us start by recalling the \emph{hyperboloid model}: consider the non-degenerate bilinear form
\[
\langle x,y\rangle\coloneqq \sum_{i=1}^d x_i y_i - x_{d+1}y_{d+1}
\]
on $\R^{d+1}$ and define
\[
\H^d\coloneqq \left\{x\in \R^{d+1}\colon \langle x,x\rangle = -1,\ x_{d+1}>0 \right\}.
\]
A metric on $\H^d$ can be defined by
\[
\rho(x,y)= \arcosh(- \langle x,y\rangle).
\]
As it turns out, $\langle \cdot,\cdot \rangle$ is positive definite on the tangent bundle of $\H^d$ and $\rho$ is exactly the Riemannian distance induced by the Riemannian metric $\langle \cdot,\cdot \rangle$ on $\H^d$. Geodesic lines are defined as the intersections of $\H^d$ with $2$-dimensional subspaces of $\R^{d+1}$. In particular, $\H^d$ is uniquely geodesic and every geodesic can be uniquely extended to a geodesic line. Angles are also defined in terms of $\langle \cdot,\cdot \rangle$: given two geodesics that meet at a point $\xi\in \H^d$, one takes unit tangent vectors $u$ and $v$ to the geodesics at $\xi$ and defines the angle between the geodesics as the unique $\alpha\in [0,\pi]$ such that $\cos \alpha= \langle u,v\rangle$. Note that, in particular, if the point in question is the origin, then $u_{d+1}v_{d+1}=0$, thus $\langle u,v\rangle_{\H^d}=\langle u,v\rangle_{\R^{d+1}}$, and thus the angle is the Euclidean one. For the same reason, if any geodesic subspace containing the origin meets a geodesic at an orthogonal angle in $\R^{d+1}$, then the (two possible) tangent vectors $u$ of said geodesic will satisfy $u_{d+1}=0$ and the hyperbolic angle will be orthogonal as well.

An \emph{isometry} of $\H^d$ is a bijection of $\H^d$ that preserves the distance $\rho$; it then follows that such isometries also preserve angles. Let us recall that the group of isometries acts transitively on $\H^d$; moreover, each isometry of $\H^d$ can be obtained as a composition of at most $d+1$ reflections through hyperplanes. More on isometries of $\H^d$ can be found in \cite[Chapter~19]{Berger}, \cite[Chapter~I.2]{BH}, or \cite[Section~10]{CFKP}.
\smallskip

We next briefly describe the \emph{Beltrami--Klein model}. It is represented by points in the Euclidean open unit ball $\mathbb{B}^d\coloneqq \left\{x\in \R^d\colon \|x\|_2< 1\right\}$. Geodesics are simply intersections of Euclidean lines with $\mathbb{B}^d$. Likewise, hyperplanes are intersections of Euclidean hyperplanes with $\mathbb{B}^d$. The metric for $\H^d_{BK}$ can be defined as follows: given points $x$ and $y$ in $\mathbb{B}^d$, let $x_\infty$ and $y_\infty$ be the intersections of the line through $x$ and $y$ with the boundary of $\mathbb{B}^d$ (arranged so that $x_\infty,x,y,y_\infty$ appear in order). Then
\[
\rho_{BK}(x,y)\coloneqq \frac{1}{2} \log\frac{\|x-y_\infty\|_2 \|y-x_\infty\|_2} {\|x-x_\infty\|_2 \|y-y_\infty\|_2}.
\]

Further information on this and more models can be found in \cite[Chapter~19]{Berger}, or \cite[Chapter~I.6]{BH}. Let us notice that, by the Killing--Hopf theorem all such models are mutually isometric. More importantly, simple and explicit isometries between the models are available, thus allowing explicit transfer of properties; see e.g. \cite[Chapter~19]{Berger}, or \cite[Theorem 3.7]{Lee}. Finally, for a gentle and elementary introduction to hyperbolic geometry, mainly in the plane, we refer the interested reader to~\cite{Anderson}.

The last result we require is a particular case of the well-known fact from Riemannian geometry that differentiable maps between Riemannian manifolds are locally Lipschitz. A direct, computational proof of this particular case can be given by using the explicit formula for the metric.
\begin{lemma}\label{lem:Beltrami-Klein-bilipschitz} Let $\H^d_{BK}$ be the Beltrami-Klein model of $\H^d$ and let $\mathbb{B}^d$ be the Euclidean open unit ball in $\R^d$. Then the identity function $\id\colon \H^d_{BK} \to \mathbb{B}^d$ is locally bi-Lipschitz.
\end{lemma}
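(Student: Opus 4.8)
The plan is to compare the Riemannian metric tensor of the Beltrami--Klein model with the Euclidean one and to deduce the statement from the standard principle that a Lipschitz bound on the infinitesimal metric yields a Lipschitz bound on the distance, provided one works on geodesically convex regions. First I would reduce the claim to the following: for every $r\in(0,1)$ the identity $\id$ is bi-Lipschitz on the closed Euclidean ball $\overline{B}(0,r)=\{x\in\R^d\colon \|x\|_2\leq r\}$, with constants depending only on $r$. This suffices, since these balls cover $\mathbb{B}^d$ and every point of $\mathbb{B}^d$ lies in the interior of one of them.

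The key computational input is the explicit form of the metric tensor $g$ of $\H^d_{BK}$. Using the explicit isometry $x\mapsto (1-\|x\|_2^2)^{-1/2}(x,1)$ onto the hyperboloid model (one checks that this lands on $\H^d$ and pulls back $-\langle\cdot,\cdot\rangle$ to $\cosh\rho_{BK}$), or equivalently by a second-order Taylor expansion of the distance formula along the diagonal, I would show that for $x\in\mathbb{B}^d$ and a tangent vector $v$,
\[
g_x(v,v)=\frac{\|v\|_2^2}{1-\|x\|_2^2}+\frac{\langle x,v\rangle^2}{(1-\|x\|_2^2)^2}.
\]
Concretely, writing $F(x,y):=\cosh\rho_{BK}(x,y)=(1-\langle x,y\rangle)\big/\sqrt{(1-\|x\|_2^2)(1-\|y\|_2^2)}$ and expanding $F(x,x+tv)=1+\tfrac{t^2}{2}g_x(v,v)+O(t^3)$ against $\cosh s=1+\tfrac{s^2}{2}+O(s^4)$ identifies $g_x$ as above. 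Since the hyperboloid distance $\rho$ is the Riemannian distance of its metric and the two models are isometric, $\rho_{BK}$ is the length distance associated with $g$, that is $\rho_{BK}(x,y)=\inf_\gamma\int_0^1\sqrt{g_{\gamma(t)}(\dot\gamma(t),\dot\gamma(t))}\,dt$ over paths $\gamma$ from $x$ to $y$ in $\mathbb{B}^d$.

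With the metric tensor in hand, both estimates become elementary. For the lower bound, since $1-\|x\|_2^2\leq 1$ and the second summand is nonnegative, one has $g_x(v,v)\geq \|v\|_2^2$ for every $x\in\mathbb{B}^d$; hence every path has hyperbolic length at least its Euclidean length, and taking the infimum yields $\rho_{BK}(x,y)\geq \|x-y\|_2$ on all of $\mathbb{B}^d$. For the upper bound on $\overline{B}(0,r)$, the bounds $\tfrac{1}{1-\|x\|_2^2}\leq \tfrac{1}{1-r^2}$ and $\langle x,v\rangle^2\leq r^2\|v\|_2^2$ combine to give $g_x(v,v)\leq (1-r^2)^{-2}\|v\|_2^2$. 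Because hyperbolic geodesics in the Beltrami--Klein model are Euclidean segments and $\overline{B}(0,r)$ is convex, the segment from $x$ to $y$ stays inside $\overline{B}(0,r)$, so integrating $g$ along it produces $\rho_{BK}(x,y)\leq (1-r^2)^{-1}\|x-y\|_2$. Together these inequalities show that $\id$ is bi-Lipschitz on $\overline{B}(0,r)$, completing the reduction.

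The hard part will be the rigorous derivation of the metric tensor $g$ from the given distance formula; the comparison estimates afterwards are routine. A secondary subtlety, easy to overlook, concerns the upper bound: passing from the pointwise inequality $g_x\leq (1-r^2)^{-2}\|\cdot\|_2^2$ to a genuine distance estimate requires that the competing path remain in the region where the bound holds, which is precisely why I restrict to the convex balls $\overline{B}(0,r)$ and invoke that the minimizing geodesic is the Euclidean segment. Should one prefer to bypass the Riemannian machinery, the same two inequalities can in principle be extracted directly from the closed form of $\cosh\rho_{BK}$ above by elementary manipulation, at the cost of somewhat more opaque computations.
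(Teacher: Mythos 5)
Your proof is correct, but note that the paper itself contains no written proof of this lemma: it justifies it by the general principle that differentiable maps between Riemannian manifolds are locally Lipschitz (applied to the identity map and its inverse, both smooth), and merely remarks that ``a direct, computational proof of this particular case can be given by using the explicit formula for the metric.'' What you have written is exactly that omitted computation, carried out in full. Your tensor
\[
g_x(v,v)=\frac{\|v\|_2^2}{1-\|x\|_2^2}+\frac{\langle x,v\rangle^2}{\left(1-\|x\|_2^2\right)^2}
\]
is the correct Klein-model metric (expanding $F(x,x+tv)$ with $a=1-\|x\|_2^2$, $b=\langle x,v\rangle$, $c=\|v\|_2^2$ indeed gives $1+\tfrac{t^2}{2}\bigl(c/a+b^2/a^2\bigr)+O(t^3)$), and both estimates are sound: $g_x(v,v)\geq\|v\|_2^2$ on all of $\mathbb{B}^d$ gives the global lower bound $\rho_{BK}(x,y)\geq\|x-y\|_2$, while $g_x(v,v)\leq(1-r^2)^{-2}\|v\|_2^2$ on $\overline{B}(0,r)$, integrated along the Euclidean segment (which stays in the convex ball $\overline{B}(0,r)$ --- you are right that this is the point one must not gloss over), gives $\rho_{BK}(x,y)\leq(1-r^2)^{-1}\|x-y\|_2$ there. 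The one computational debt you defer is the identity $\cosh\rho_{BK}(x,y)=(1-\langle x,y\rangle)/\sqrt{(1-\|x\|_2^2)(1-\|y\|_2^2)}$ starting from the paper's cross-ratio definition; this is a standard fact, equivalent to the statement that $x\mapsto(1-\|x\|_2^2)^{-1/2}(x,1)$ is an isometry onto the hyperboloid model, and the paper explicitly licenses its use (``simple and explicit isometries between the models are available''). Compared with the paper's soft argument, your route costs more computation but buys quantitative information the soft argument does not provide: explicit bi-Lipschitz constants $1$ and $(1-r^2)^{-1}$ on each ball $\overline{B}(0,r)$, which is in fact all that the later applications (bi-Lipschitz equivalence of the tiles $P_m$ with Euclidean polytopes, with uniform distortion) actually need.
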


\subsection{Tilings of \texorpdfstring{$\H^d$}{Hd}}\label{ssec:Tiling}
We call a subset of $\R^d$ or $\H^d$ a \emph{polyhedron} if it is a finite intersection of closed half-spaces. A bounded (or equivalently compact) polyhedron is called a \emph{polytope}. Note that polyhedra are convex sets, as is, for example, obvious in the Beltrami--Klein model. Hence the nearest point projection, namely the mapping which assigns to a point $x$ the point in the polyhedron which minimises the distance to $x$, is well-defined and $1$-Lipschitz, see Proposition~2.4 in~\cite[p.~176]{BH}. In particular, polyhedra are $1$-Lipschitz retracts of both $\R^d$ and $\H^d$.

In order to investigate the local and global structure of the Lipschitz functions on $\H^d$ separately, we will need a suitable tiling of the hyperbolic space $\H^d$. A sequence $(P_n)_{n\in\N}$ of polytopes is a \emph{tiling}, or \emph{tessellation}, of $\H^d$ if $\bigcup_{n\in\N}P_n= \H^d$ and the intersection of any two distinct polytopes is either empty or a face of both polytopes (in particular, the interiors of the polytopes $P_n$ are mutually disjoint).

In our arguments we will use the existence in $\H^d$ for $d=2,3,4$ of a \emph{regular orthogonal tiling}, namely a tiling $(P_n)_{n\in\N}$ consisting of mutually isometric polytopes and where each $P_n$ satisfies the following definitions (see e.g. \cite[Chapter~6]{Davis} for more details):
\begin{enumerate}[label=(T\arabic*),ref=T\arabic*]
    \item\label{T1-regular} A polytope $P$ in $\H^d$ is \emph{regular} if the group of isometries of $P$ is flag-transitive. More precisely, a \emph{flag} in $P$ is a chain $F_0\subset F_1\subset \dots\subset F_{d-1}$ where $F_0$ is a vertex of $P$ and each $F_k$ is a $k$-dimensional face of $P$. Then $P$ is regular if for every two flags $F_0\subset F_1\subset \dots\subset F_{d-1}$ and $F'_0\subset F'_1\subset \dots\subset F'_{d-1}$ there is an isometry of $P$ that maps one flag onto the other.
    \item\label{T2-orthogonal} A polytope is \emph{right-angled}, or \emph{orthogonal}, if all dihedral angles are exactly $\pi/2$.
\end{enumerate}

Before passing to the explanation of the existence of the tiling, let us mention two more properties that follow from \eqref{T1-regular} and \eqref{T2-orthogonal} (two further properties will be proved in Lemma~\ref{lem:orthogonal-reflections-commute} and Lemma~\ref{lem:tiles-form-hyperplanes} below).
\begin{enumerate}[label=(T\arabic*),ref=T\arabic*]\setcounter{enumi}{2}
    \item\label{T3-incentre} Every polytope $P_n$ has an inscribed circle, whose centre we denote by $p_n$ and whose radius is by definition the \emph{in-radius} of $P_n$.\\
    Indeed, every regular polytope $P$ admits a centre, whose existence can be shown as follows. If $F$ is a maximal face of $P$, then $x\mapsto \dist(x,F)$ is a convex function (as follows easily from Proposition~2.2 in \cite[p.~176]{BH}). Thus, letting $\{F_1,\dots,F_k\}$ be the maximal faces of $P$, the map $x\mapsto \dist(x,F_1)^2 +\dots+ \dist(x,F_k)^2$ is strictly convex and hence has a unique minimum $p$. Since the above map is defined only by metric properties, every isometry of $P$ must fix $p$. Finally, by regularity, $p$ has the same distance to all maximal faces. For further details we refer to \cite[pp.~178--179]{BH}.
    \item\label{T4-intersections} There is a number $N(d)$ such that every $P_n$ intersects at most $N(d)$ polytopes from the tiling.\\
    Indeed, at each vertex of $P_n$ there are exactly $2^d$ polytopes intersecting $P_n$, by the right-angle property. So, a (rough) upper bound for $N(d)$ is $2^d$ times the number of vertices of $P_n$ (such a number of vertices is independent of $n$, as the polytopes are mutually isometric).
\end{enumerate}

We now pass to explaining the existence of regular orthogonal tilings $(P_n)_{n\in\N}$ for $d=2,3,4$. By Proposition~6.3.2 and~6.3.9 in~\cite{Davis} every polytope $P$ whose dihedral angles are of the form $\pi/m$ for some integer $m\geq 2$ (these polytopes are sometimes called \emph{Coxeter polytopes}) is simple. Hence by Theorem~6.4.3 in~\cite{Davis} $P$ is a strict fundamental domain of the reflection group generated by reflections across the faces of $P$. In other words $P$ intersects every orbit in exactly one point. This implies in particular that the images of $P$ by the elements of the reflection group tessellate~$\H^d$. Thus, every element of the reflection group maps the faces of a polytope $P_n$ to the faces of some other polytope of the tiling.

If we restrict our attention to right-angled polytopes, by the above observations, the existence of a right-angled polytope $P$ directly implies that there is a tessellation of $\H^d$ by isometric copies of $P$. Several explicit constructions of such polytopes for dimensions $d=2,3,4$ are available in the literature and we mention a few of them in the next paragraph. On the other hand, E.B.~Vinberg proved in~\cite{Vinberg} that there is no right-angled polytope in $\H^d$ when $d\geq 5$; a short proof of this fact can also be found in~\cite[Section~2]{PV}. Consequently, an orthogonal tiling of $\H^d$ exists precisely for dimensions $d=2,3,4$.

As it turns out, in dimension $d=2,3,4$ an orthogonal tiling can be constructed that is additionally regular. In dimension $d=2$, there are infinitely many regular orthogonal tilings, one for each integer $p\geq5$. They are obtained by taking a regular right-angled $p$-gon in $\H^2$ and gluing together $4$ of them at each vertex; by means of the so-called \emph{Schl\"{a}fli symbol} such a configuration is denoted by $\{p,4\}$. For instance, $\{8, 4\}$ is a tiling via four regular octagons meeting at each vertex, illustrated in Figure~\ref{fig:octagonal_tiling}. In dimension $d=3$ a tiling exists described by $\{5,3,4\}$, which is to be read as follows: start with a regular right-angled pentagon in $\H^2$ and glue three of them at each vertex. This gives a (hyperbolic) dodecahedron $\{5,3\}$ and gluing $4$ on each edge gives the tiling. In dimension $d=4$, instead, one glues together $3$ dodecahedra each edge to obtain a $4$-dimensional polytope $\{5,3,3\}$ whose maximal faces are the dodecahedra $\{5,3\}$. Then, gluing $4$ such polytopes at each $2$-dimensional face one gets the tiling $\{5,3,3,4\}$. A list of all regular tilings of $\H^d$ can be found in~\cite[Section~5.3.3]{VS-GeoII} or~\cite[Sections~2--4]{Coxeter-honeycomb} (the orthogonal ones among them are exactly those with $4$ as the last number in their Schl\"{a}fli symbol). 
\smallskip

\begin{figure}
    \includegraphics[width=.6\textwidth]{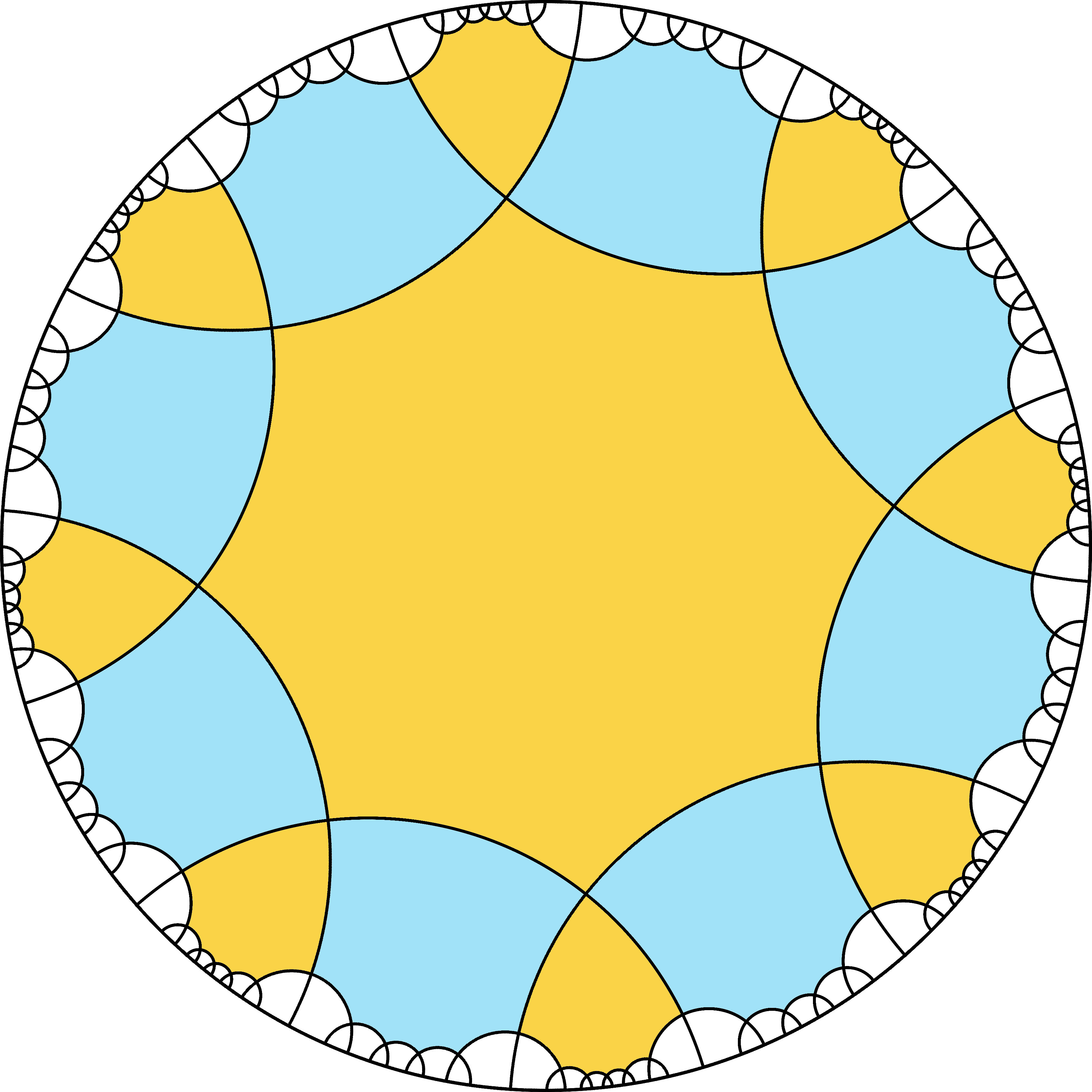}
    \caption{The first seventeen isometric octagons tiling the hyperbolic plane.\label{fig:octagonal_tiling}}
\end{figure}

One notable feature of orthogonal tilings, which we will use in the next lemma as well as Section~\ref{sec:main_result}, is the following:

\begin{lemma}\label{lem:orthogonal-reflections-commute}
    Whenever two hyperplanes $H_1, H_2\subset \H^d$ meet orthogonally, the reflections $R_{H_1}$ and $R_{H_2}$ across them commute and $R_{H_1} R_{H_2}= R_{H_2} R_{H_1}= R_{H_1\cap H_2}$. Further, $R_{H_2}[H_1] =H_1$ and $R_{H_2}$ preserves the two half-spaces defined by $H_1$.
\end{lemma}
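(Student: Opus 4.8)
The plan is to work in the hyperboloid model, where the geometry of hyperplanes and reflections is most transparent, and to reduce everything to linear-algebraic computations via the indefinite inner product $\langle\cdot,\cdot\rangle$. Recall that a hyperplane $H\subset\H^d$ is the intersection of $\H^d$ with a $d$-dimensional subspace of $\R^{d+1}$; concretely, each such $H$ is determined by a vector $v$ with $\langle v,v\rangle=1$ via $H=\{x\in\H^d\colon \langle x,v\rangle=0\}$, and the associated reflection is the restriction to $\H^d$ of the linear map $R_H(x)=x-2\langle x,v\rangle v$. The key observation is that this is precisely the $\langle\cdot,\cdot\rangle$-orthogonal reflection in the subspace $v^{\perp}$, so all the desired relations will follow from the standard algebra of reflections in a (nondegenerate) inner product space, once I check that the indefinite signature causes no trouble.

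First I would translate the orthogonality hypothesis into the statement $\langle v_1,v_2\rangle=0$, where $v_1,v_2$ are the normal vectors of $H_1,H_2$. The cleanest way to see this is to pick a point $\xi\in H_1\cap H_2$ and recall from the excerpt that the dihedral angle between the hyperplanes is computed from the tangent vectors at $\xi$; since $v_i$ is $\langle\cdot,\cdot\rangle$-orthogonal to the tangent space of $H_i$ and the angle between the hyperplanes equals the angle between their normals, orthogonality of $H_1$ and $H_2$ is exactly $\langle v_1,v_2\rangle=0$. (One must take a tiny bit of care that $H_1\cap H_2$ is nonempty, which is part of the hypothesis that the hyperplanes "meet".) Granting this, I compute directly: for any $x$,
\[
R_{H_1}R_{H_2}(x)=x-2\langle x,v_2\rangle v_2-2\langle x,v_1\rangle v_1+4\langle x,v_2\rangle\langle v_2,v_1\rangle v_1,
\]
and since $\langle v_2,v_1\rangle=0$ the last term vanishes, yielding the symmetric expression $x-2\langle x,v_1\rangle v_1-2\langle x,v_2\rangle v_2$, which is manifestly unchanged when the roles of $1$ and $2$ are swapped. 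This proves $R_{H_1}R_{H_2}=R_{H_2}R_{H_1}$ in one stroke.

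Next I would identify this common composition with $R_{H_1\cap H_2}$. The fixed-point set of $R_{H_1}R_{H_2}$ in $\R^{d+1}$ is exactly $\{v_1,v_2\}^{\perp}=v_1^{\perp}\cap v_2^{\perp}$, whose intersection with $\H^d$ is $H_1\cap H_2$; moreover $R_{H_1}R_{H_2}$ acts as $-\id$ on $\mathrm{span}(v_1,v_2)$ and as the identity on its orthogonal complement, which is precisely the description of the reflection across the codimension-two geodesic subspace $H_1\cap H_2$. (Here I should note that $\langle\cdot,\cdot\rangle$ restricts to a nondegenerate form on $\mathrm{span}(v_1,v_2)$, since both $v_i$ are spacelike and orthogonal, so the orthogonal decomposition is genuine and $R_{H_1\cap H_2}$ is well-defined.) Finally, for the last assertions, $R_{H_2}$ maps $H_1$ into $\H^d$ and preserves the defining equation $\langle\,\cdot\,,v_1\rangle=0$ because $R_{H_2}$ is $\langle\cdot,\cdot\rangle$-orthogonal and fixes $v_1$ (as $\langle v_1,v_2\rangle=0$ gives $R_{H_2}(v_1)=v_1$); hence $\langle R_{H_2}(x),v_1\rangle=\langle x,R_{H_2}(v_1)\rangle=\langle x,v_1\rangle$ for all $x$, so $R_{H_2}[H_1]=H_1$ and the sign of $\langle x,v_1\rangle$ is preserved, which is exactly the statement that $R_{H_2}$ preserves each of the two half-spaces cut out by $H_1$.

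I expect the only genuine subtlety—rather than the routine algebra above—to be the translation of the geometric orthogonality hypothesis into the clean algebraic condition $\langle v_1,v_2\rangle=0$, since one has to relate the hyperbolic dihedral angle (defined via unit tangent vectors at a meeting point) to the indefinite inner product of the normals and confirm that no spurious factors from the $-1$ in the signature intervene. Once that identification is secured, the commutation, the identification with $R_{H_1\cap H_2}$, and the half-space preservation are all immediate consequences of the fact that $R_{H_i}$ is literally an $\langle\cdot,\cdot\rangle$-orthogonal linear reflection, so the whole argument reduces to elementary manipulations with orthogonal normal vectors.
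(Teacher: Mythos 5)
Your proof is correct, and it lives in the same model as the paper's proof (the hyperboloid model, exploiting the fact that hyperbolic reflections are restrictions of linear maps), but the execution is genuinely different. The paper normalizes first: by transitivity of the isometry group it moves a point of $H_1\cap H_2$ to the origin, and then, using the fact that hyperbolic angles at the origin are Euclidean angles, rotates about the $x_{d+1}$-axis so that $H_1=\{y\colon y_1=0\}$ and $H_2=\{y\colon y_2=0\}$; the two reflections become the coordinate sign flips $x\mapsto(-x_1,x_2,\dots,x_{d+1})$ and $x\mapsto(x_1,-x_2,\dots,x_{d+1})$, from which all four claims are read off at a glance. You avoid any normalization and instead work invariantly with spacelike unit normals and the Lorentzian formula $R_{H_i}(x)=x-2\langle x,v_i\rangle v_i$; this shifts the entire burden onto the dictionary ``the hyperplanes meet orthogonally $\iff \langle v_1,v_2\rangle=0$,'' which the paper's normalization sidesteps, and you correctly single this out as the one subtle point and justify it adequately (the $v_i$ lie in the tangent space $\xi^{\perp}$ at a common point $\xi$, where the form is positive definite and the dihedral angle is the angle between the normals). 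What each approach buys: the paper's coordinates make the verification trivial, at the cost of invoking homogeneity and the rotation step; your computation is coordinate-free, proves the commutation in one stroke, identifies $R_{H_1}R_{H_2}$ with $R_{H_1\cap H_2}$ structurally (it acts as $-\id$ on $\mathrm{span}(v_1,v_2)$, where the form is positive definite and hence nondegenerate, and as the identity on the orthogonal complement), and gets the half-space preservation cleanly from $R_{H_2}(v_1)=v_1$ together with the fact that $R_{H_2}$ preserves the bilinear form, so the sign of $\langle\,\cdot\,,v_1\rangle$ is invariant.
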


\begin{proof} We will prove this statement using the hyperboloid model of $\H^d$. Since for every point in $\H^d$ there exists an automorphism mapping it to the origin, we can assume that $0\in H_1\cap H_2$. Moreover, up to a rotation around $0$ of $\H^d$ (i.e. a rotation of $\R^{d+1}$ around the $x_{d+1}$ axis), we may assume that $H_1=\{y\in\H^d\colon y_1=0\}$ and $H_2=\{y\in\H^d\colon y_2=0\}$ (recall that angles at the origin $0_{\H^d}$ are Euclidean angles).

Then it follows that $R_{H_1}(x) = (-x_1,x_2,\dots,x_{d+1})$ and $R_{H_2}(x)=(x_1,-x_2,\dots,x_{d+1})$ for $x\in \H^d$. Clearly these two mappings commute and their composition is the mapping $x\mapsto (-x_1,-x_2,x_3,\dots,x_{d+1})$, which is the reflection across $H_1\cap H_2$. The last clause also follows directly from these formulas.
\end{proof}

As one can already glean from Figure~\ref{fig:octagonal_tiling}, the edges of the octagons combine into geodesic lines, and this is true in general for any regular orthogonal tiling.

\begin{lemma}\label{lem:tiles-form-hyperplanes} Let $\mathcal{P}=(P_n)_{n\in\N}$ be a regular orthogonal tiling of $\H^d$, $d\leq4$. Then, there exists a sequence $(H_k)_{k\in\N}$ of hyperplanes in $\H^d$ such that
    \begin{align*}
        \bigcup_{n\in\N}\partial P_n = \bigcup_{k\in\N}H_k.
    \end{align*}
    These $H_k$ are exactly all the supporting hyperplanes for any maximal face of any $P_n$. As a consequence, if $R$ is a reflection across some maximal face,
    \[ R\left[ \bigcup_{k\in\N}H_k \right]= \bigcup_{k\in\N}H_k. \]
\end{lemma}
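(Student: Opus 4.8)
The plan is to show that the boundaries of all tiles coincide with a well-chosen family of hyperplanes, and that this family is closed under reflection across any maximal face. I would define $(H_k)_{k\in\N}$ to be \emph{exactly} the supporting hyperplanes of the maximal faces of the tiles, and then prove the two inclusions of the set equality, followed by the invariance statement.

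First I would establish $\bigcup_n \partial P_n \subseteq \bigcup_k H_k$. Each $\partial P_n$ is the union of the maximal faces of $P_n$, and every such maximal face lies in its supporting hyperplane, which by definition is one of the $H_k$. For the reverse inclusion $\bigcup_k H_k \subseteq \bigcup_n \partial P_n$, the content is that each supporting hyperplane $H_k$ of a maximal face $F$ of some $P_n$ is \emph{entirely} covered by faces of tiles, i.e. that $F$ propagates to a full geodesic hyperplane through the tiling. The key mechanism here is reflection: since the tiling comes from the reflection group generated by reflections across the maximal faces of the tiles, reflecting $P_n$ across $F$ produces an adjacent tile $P_m$ sharing the face $F$. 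To extend $F$ to a full hyperplane, I would argue that the two tiles adjacent to $H_k$ along a maximal face, when reflected across their \emph{other} maximal faces that meet $F$ orthogonally, produce new tiles whose corresponding maximal faces again lie in $H_k$. This is precisely where the right-angle hypothesis enters, via Lemma~\ref{lem:orthogonal-reflections-commute}: if a maximal face $F'$ of $P_n$ meets $F$, it does so orthogonally, and then $R_{H_{F'}}$ preserves the hyperplane $H_k = H_F$ (the ``$R_{H_2}[H_1]=H_1$'' clause). Thus reflecting across neighbouring orthogonal faces keeps us inside $H_k$ while sliding the covered portion of $H_k$ to the adjacent tiles along $H_k$.

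The main obstacle I anticipate is turning this local propagation into a genuine covering of \emph{all} of $H_k$. The orthogonality argument shows that the region of $H_k$ covered by tile-faces is invariant under a group of hyperbolic isometries of $H_k$ (the reflections coming from faces orthogonal to $H_k$), but I must check that this group acts on $H_k$ with relatively compact (indeed, tile-sized) fundamental domain, so that the orbit of a single maximal face $F$ exhausts $H_k$. Here I would use regularity and the homogeneity of the tiling: the induced tiling structure on $H_k$ is itself a regular orthogonal tiling of the $(d-1)$-dimensional hyperplane $H_k\cong\H^{d-1}$, so by an inductive or direct covering argument the faces lying in $H_k$ tile all of $H_k$. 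A clean way to phrase this is that $H_k$, being a geodesic hyperplane meeting tiles orthogonally, cuts each tile it enters through a maximal face, so $H_k \cap \bigcup_n P_n = H_k$ is a union of such faces; combined with $\bigcup_n P_n = \H^d$ this gives the claim.

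Finally, the invariance $R\bigl[\bigcup_k H_k\bigr] = \bigcup_k H_k$ for a reflection $R$ across a maximal face follows formally once the hyperplane description is in place. Such an $R$ is an element of the reflection group, hence permutes the tiles $P_n$ and therefore permutes their maximal faces and the supporting hyperplanes $H_k$; thus $R$ maps the family $\{H_k\}$ into itself, and being an involution it maps it \emph{onto} itself. I would spell this out by noting $R[\partial P_n] = \partial P_{\sigma(n)}$ for the induced permutation $\sigma$, so that $R$ sends the union of all tile-boundaries to itself, which is exactly $\bigcup_k H_k$ by the first part. This last step is essentially bookkeeping and should present no real difficulty once the equality of the two unions is secured.
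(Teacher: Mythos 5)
Your proposal is correct and follows essentially the same route as the paper's proof: you define the $H_k$ as the supporting hyperplanes of maximal faces, get one inclusion for free, and prove the other by observing that a maximal face $M=H_k\cap P_n$ is a regular right-angled polytope in $H_k\cong\H^{d-1}$ whose reflection-group tiling of $H_k$ consists entirely of tile faces, since each generating reflection extends to an ambient reflection across a face-supporting hyperplane orthogonal to $H_k$ (Lemma~\ref{lem:orthogonal-reflections-commute}); the invariance under $R$ then follows because $R$ permutes tiles and hence faces, exactly as in the paper. One caution: your parenthetical ``clean way to phrase'' the covering step (that $H_k$ cuts every tile it enters through a maximal face) is circular, since that is precisely what the lemma asserts, so the reflection-propagation argument should remain the actual proof.
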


\begin{proof} Let $(H_k)_{k\in\N}$ be the sequence listing all the supporting hyperplanes for any maximal face of any polytope $P_n$. Then the validity of the `$\subset$' inclusion is clear.

For the converse inclusion, take a hyperplane $H\in\{H_k\}_{k\in\N}$. By definition, there is a polytope $P\in \mathcal{P}$ such that $M\coloneqq H\cap P$ is a maximal face of $P$. Then $M$ is a regular right-angled polytope in $H$ (which is isometric to $\H^{d-1}$), hence the reflection group generated by reflections across the maximal faces of $M$ induces a tiling of $H$, by copies of $M$. However, a reflection across a maximal face of $M$ is the restriction to $H$ of a reflection across a hyperplane $\tilde{H}$ in $\H^d$, orthogonal to $H$ (in fact, the reflection across $\tilde{H}$ maps $H$ to $H$, by Lemma~\ref{lem:orthogonal-reflections-commute}). Moreover, $\tilde{H}$ supports a maximal face of $P$, as $P$ is right-angled. Therefore, the images of $M$ under the reflection group of $M$ are images of maximal faces of $P$ under the reflection group of $P$. Hence, $H\subset \bigcup_{n\in\N}\partial P_n$, as desired.

Finally, the last claim follows because $R$ preserves the set $\bigcup_{n\in\N}\partial P_n$ of faces.
\end{proof}

\section{Lipschitz functions on polytopes}\label{sec:local result}
This section is dedicated to the local part of our construction, where we study the Banach space of Lipschitz functions on a single polytope. The result we prove in this section asserts that the space of all Lipschitz functions that vanish on certain subsets of the boundary of a polytope $P$ is \wtow isomorphic to $\Lip_0(P)$. Since $\H^d$ is locally bi-Lipschitz equivalent to $\R^d$ by Lemma~\ref{LipschitzLemma}, it is irrelevant if we consider Euclidean or hyperbolic polytopes; therefore, in all the section we consider polytopes in $\R^d$. As we explained in the Introduction, our focus being on the hyperbolic case, we treat the Euclidean case as a black-box; this explains why in this section we will use non-explicit arguments and we don't have an explicit formula for the isomorphism.

We begin by introducing the following notation.
\begin{definition} If $S$ is a subset of a pointed metric space $M$, we denote by
\[
\Lip_{0,S}(M):=\{f\in\Lip_0(M)\colon f|_S=0\},
\]
the space of all Lipschitz functions on $M$ vanishing on $S$ (in addition to vanishing on $0_M$).
\end{definition}

It is easy to see that $\Lip_{0,S}(M)$ is pointwise closed, hence weak$^*$ closed as well. Thus, $\Lip_{0,S}(M)$ is the dual of a quotient of $\F(M)$ and the corresponding weak$^*$ topology is the restriction of the weak$^*$ topology of $\Lip_0(M)$, hence on bounded sets it coincides with the pointwise one.

Before the main result of the section, we collect in the following lemma a basic construction of a linear extension operator.

\begin{lemma}\label{lem:ExtensionOperatorRetract}
Let $(M,d)$ be a pointed metric space and $N\subset M$ be a bounded Lipschitz retract of $M$ with $0_M\in N$. Then for every $\e>0$ there is a bounded, \wtow continuous, linear extension operator
\[
E_{N,\e} \colon \Lip_0(N) \to \Lip_0(M), \qquad f \mapsto E_{N,\e}f
\]
such that, for every $f\in \Lip_0(N)$, the support of $E_{N,\e}f$ is contained in the set $\overline{B}(N,\e)=\{x\in M\colon {\rm dist}(x,N)\leq \e\}$.
\end{lemma}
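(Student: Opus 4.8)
The plan is to compose the given Lipschitz retraction with a Lipschitz cut-off. Let $r\colon M\to N$ be a Lipschitz retraction, so that $r|_N=\id$ and $L:=\Lip(r)<\infty$, and introduce the cut-off
\[
\psi(x):=\max\Bigl\{0,\,1-\tfrac{\dist(x,N)}{\e}\Bigr\}\qquad(x\in M),
\]
which takes values in $[0,1]$, equals $1$ on $N$, vanishes on $\{x\colon\dist(x,N)\geq\e\}$, and satisfies $\Lip(\psi)\leq 1/\e$ since $x\mapsto\dist(x,N)$ is $1$-Lipschitz. I would then set
\[
E_{N,\e}f:=\psi\cdot(f\circ r)\qquad(f\in\Lip_0(N)).
\]
Linearity is immediate. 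Because $\psi\equiv1$ and $r=\id$ on $N$, the function $E_{N,\e}f$ restricts to $f$ on $N$; and since $0_M\in N$ with $f(0_M)=0$ we get $E_{N,\e}f(0_M)=0$, so $E_{N,\e}f\in\Lip_0(M)$ once the Lipschitz bound below is established. Finally $E_{N,\e}f(x)=0$ whenever $\dist(x,N)\geq\e$, so the support of $E_{N,\e}f$ is contained in $\overline B(N,\e)$, as required.

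The crux is the Lipschitz estimate, and here one must be careful because $g:=f\circ r$ need not be globally bounded, so the product inequality recalled in Section~\ref{sec:preliminaries} cannot be applied verbatim. Instead I would estimate $|E_{N,\e}f(x)-E_{N,\e}f(y)|$ directly. If $\psi(x)=\psi(y)=0$ the difference vanishes, so I may assume otherwise; then at least one of $x,y$ — say $x$ — satisfies $\dist(x,N)<\e$, and I would use the splitting
\[
\psi(x)g(x)-\psi(y)g(y)=\bigl(\psi(x)-\psi(y)\bigr)g(x)+\psi(y)\bigl(g(x)-g(y)\bigr).
\]
On $\overline B(N,\e)$ one has $r(x)\in N$, whence $|g(x)|=|f(r(x))-f(0_M)|\leq\Lip(f)\,\diam(N)$, while $|\psi(y)|\leq1$, $\Lip(\psi)\leq1/\e$, and $\Lip(g)\leq L\,\Lip(f)$. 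Using the symmetric splitting when instead $y$ is the point close to $N$, this yields
\[
\Lip(E_{N,\e}f)\leq\Bigl(\frac{\diam(N)}{\e}+L\Bigr)\Lip(f),
\]
so $E_{N,\e}$ is bounded.

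It remains to verify \wtow continuity. As $E_{N,\e}$ is bounded, the Banach--Dieudonn\'e criterion recalled in Section~\ref{sec:preliminaries} reduces this to pointwise-to-pointwise continuity on bounded sets, which is transparent from the formula: for each fixed $x\in M$ the value $E_{N,\e}f(x)=\psi(x)\,f(r(x))$ depends on $f$ only through its value at the single point $r(x)\in N$, so if $f_\alpha\to f$ pointwise on $N$ then $E_{N,\e}f_\alpha(x)\to E_{N,\e}f(x)$ for every $x$. The only genuinely delicate point is the localisation in the Lipschitz estimate, forced by the fact that $f\circ r$ is unbounded away from $N$; the remaining verifications are routine.
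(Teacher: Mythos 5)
Your construction coincides with the paper's proof: the same operator $E_{N,\e}f=(f\circ r)\cdot\psi$ with the same cut-off, essentially the same norm bound $\Lip(r)+\diam(N)/\e$, and the same reduction of \wtow continuity to pointwise-to-pointwise continuity via Banach--Dieudonn\'e. The one point where you diverge---doing the Lipschitz estimate by hand because, you claim, $f\circ r$ ``need not be globally bounded''---rests on a misconception: $r$ takes values in the bounded set $N$ and $f(0_M)=0$, so $\|f\circ r\|_\infty\leq\Lip(f)\,\diam(N)<\infty$, and the product inequality recalled in Section~\ref{sec:preliminaries} applies verbatim, which is exactly how the paper argues. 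Your direct splitting is nevertheless valid and yields the same constant, so this is a harmless detour rather than a gap.
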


\begin{proof} Fix a Lipschitz retraction $r$ of $M$ onto $N$ and define the mapping $\lambda\colon M\to [0,1]$ by $\lambda(x) = \max\{1-\frac{ {\rm dist}(x,N)}{\e}, 0\}$. $\lambda$ is clearly $1/\e$-Lipschitz and vanishes outside $\overline{B}(N,\e)$. Then the mapping
\[
E_{N,\e}\colon \Lip_0(N) \to \Lip_0(M), \qquad f\mapsto (f\circ r)\lambda
\]
is well defined and bounded since
\begin{align*}
\Lip\left( (f\circ r)\lambda \right) &\leq \Lip(f\circ r) \|\lambda\|_\infty + \Lip(\lambda) \|f\circ r\|_\infty\\
&\leq \Lip(f)\Lip(r) +\frac{1}{\e} \|f\|_\infty \leq \left(\Lip(r) + \frac{\diam (N)}{\e}\right) \|f\|_{\Lip_0}.
\end{align*}
Finally, the pointwise-to-pointwise continuity of $E_{N,\e}$ is clear from the definition.
\end{proof}

\begin{proposition}\label{prop:polygons-with-and-without-edge-isomorphic}
Let $P$ be a polygon in $\R^d$ with $0\in {\rm int}(P)$ and let $S\subset \partial P$ be the union of one or more faces (of any dimension) of $P$. Then,
\[
\Lip_0(P)\simeq \Lip_{0,S}(P).
\]
Moreover, the isomorphism is \wtow continuous.
\end{proposition}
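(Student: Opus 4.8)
The plan is to deduce the isomorphism from two complementation facts together with a Pe{\l}czy\'nski-type absorption. Write $X:=\Lip_0(P)$, $Y:=\Lip_{0,S}(P)$ and $B:=\Lip_0(S\cup\{0\})$. I will establish that (a) $X\cong Y\oplus B$ via a \wtow continuous projection, and (b) $Y$ contains a \wtow complemented subspace isomorphic to the $\ell_\infty$-sum $\ell_\infty(X):=(\bigoplus_{n}X)_{\ell_\infty}$. Granting these, the conclusion is quick: by (a) the space $B$ is the range of a \wtow continuous projection of $X$, so writing $X\cong B\oplus W$ one gets $\ell_\infty(X)\cong \ell_\infty(B)\oplus\ell_\infty(W)\cong(B\oplus\ell_\infty(B))\oplus\ell_\infty(W)\cong B\oplus\ell_\infty(X)$, i.e. $\ell_\infty(X)\oplus B\cong\ell_\infty(X)$. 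Writing $Y\cong\ell_\infty(X)\oplus R$ from (b), this yields $Y\oplus B\cong(\ell_\infty(X)\oplus B)\oplus R\cong\ell_\infty(X)\oplus R\cong Y$, and feeding this back into (a) gives $X\cong Y\oplus B\cong Y$. All the isomorphisms just used are reindexing and shift maps, or direct sums of \wtow continuous operators, hence \wtow continuous; so the final identification of $X$ with $Y$ is \wtow continuous.

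For (a), the set $N:=S\cup\{0\}$ is a compact, hence $\lambda$-doubling (for some $\lambda=\lambda(d)$), subset of $P$ containing the basepoint. The Lee--Naor theorem then provides a bounded \wtow continuous linear extension operator $E\colon\Lip_0(N)\to\Lip_0(P)$. The restriction map $R_N\colon f\mapsto f|_N$ is \wtow continuous and maps $X$ onto $\Lip_0(N)$, and $P_S:=\id-E\circ R_N$ is a \wtow continuous projection of $X$ onto $Y$: indeed $P_Sf$ vanishes on $N\supseteq S$, while $P_Sf=f$ whenever $f|_N=0$. The complementary projection $E\circ R_N$ has range isomorphic to $\Lip_0(N)=B$, giving the \wtow splitting $X\cong Y\oplus B$.

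For (b), the idea is to pack infinitely many shrinking homothetic copies of $P$ into the interior of $P$, far from $S$, and glue the compactly supported extensions of Lemma~\ref{lem:ExtensionOperatorRetract}. Fix an interior point $q\neq 0$ and a closed ball $\overline B(q,\eta)\subset\operatorname{int}(P)$ with $\dist(\overline B(q,\eta),\partial P)>0$ and $0\notin\overline B(q,\eta)$. Inside it I place pairwise disjoint homothetic copies $P_n=c_n+\rho^nP$ accumulating only at $q$, arranged lacunarily so that the mutual distances and the distances to $\partial P$ are all comparable to the diameters $\rho^n\diam(P)$; choosing $\e_n:=\kappa\rho^n$ with $\kappa$ small makes the neighbourhoods $\overline B(P_n,\e_n)$ pairwise disjoint and disjoint from $S$ and from $0$. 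Each $P_n$ is a $1$-Lipschitz retract of $P$, so a minor variant of Lemma~\ref{lem:ExtensionOperatorRetract} (where one only needs the basepoint of $P$ to lie outside the support $\overline B(P_n,\e_n)$, rather than inside $P_n$, to guarantee the extension lies in $\Lip_0(P)$) yields \wtow continuous operators $E_n\colon\Lip_0(P_n)\to\Lip_0(P)$ supported in $\overline B(P_n,\e_n)$, with uniform bound $\|E_n\|\leq 1+\diam(P)/\kappa=:C_0$. Since each $\Lip_0(P_n)$ is isometric to $X$ through the defining homothety, the assignment $(f_n)_n\mapsto\sum_n E_nf_n$ defines $E\colon\ell_\infty(X)\to Y$; as $P$ is convex, hence geodesic, and the supports are disjoint, Lemma~\ref{LipschitzLemma} applied to the cover of $P$ by the sets $\overline B(P_n,\e_n)$ together with the closure of their complement gives $\Lip(\sum_nE_nf_n)\leq C_0\sup_n\|f_n\|$. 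The restrictions $g\mapsto(g|_{P_n}-g(c_n))_n$ form a \wtow continuous left inverse, so $E(\ell_\infty(X))$ is \wtow complemented in $Y$ and isomorphic to $\ell_\infty(X)$.

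The main obstacle is the verification in (b) that the formally infinite sum $\sum_nE_nf_n$ is a genuinely (uniformly) Lipschitz function. Away from the accumulation point $q$ this is immediate from the disjointness of supports and Lemma~\ref{LipschitzLemma}; the delicate point is continuity at $q$ itself, where infinitely many supports cluster. This is settled by the estimate $\|E_nf_n\|_\infty\leq\diam(P_n)\|f_n\|=\rho^n\diam(P)\|f_n\|\to 0$, which forces the sum to tend to $0=(\sum_nE_nf_n)(q)$ as one approaches $q$, so the function is continuous there and Lemma~\ref{LipschitzLemma} applies globally. Once this is in place, the remaining bookkeeping---that every operator in the chain is pointwise-to-pointwise, hence \wtow, continuous, and that the abstract absorption isomorphisms respect the weak$^*$ structure---is routine via the Banach--Dieudonn\'e characterisation recalled in Section~\ref{sec:preliminaries}.
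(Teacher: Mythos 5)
Your proof is correct, but its key absorption step takes a genuinely different route from the paper's. The first half coincides: like the paper, you apply the Lee--Naor theorem to the doubling set $S\cup\{0\}$ to get a \wtow continuous projection of $X=\Lip_0(P)$ onto $Y=\Lip_{0,S}(P)$ (the paper does not explicitly identify the complement as $B=\Lip_0(S\cup\{0\})$, but its construction gives exactly that). The second halves diverge. The paper embeds a \emph{single} copy of $\Lip_0(P)$ complementably into $\Lip_{0,S}(P)$ (via the dilation $\alpha P$ and Lemma~\ref{lem:ExtensionOperatorRetract}), then passes to the preduals, invokes Kaufmann's results \cite{K2014} to the effect that $\F(P)$ is isomorphic to its own $\ell_1$-sum, runs the Pe{\l}czy\'nski decomposition method there, and dualizes back. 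You instead embed the whole sum $\big(\bigoplus_n\Lip_0(P)\big)_{\ell_\infty}$ complementably into $\Lip_{0,S}(P)$, by packing shrinking homothetic copies of $P$ with disjointly supported extension operators, and then perform the absorption entirely at the dual level: from $X\cong Y\oplus B$ and $Y\cong\ell_\infty(X)\oplus R$ you deduce $\ell_\infty(X)\oplus B\cong\ell_\infty(X)$ and hence $Y\cong Y\oplus B\cong X$; this scheme is slightly non-standard but valid. Your route buys two things: it avoids the black-box citation of \cite{K2014} (the packing construction is in effect a hands-on proof of the special case needed), and it keeps every isomorphism visibly \wtow continuous without the duality detour --- precisely the kind of ``entirely explicit'' variant that the Introduction of the paper says is possible. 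The price is the verification that $\sum_n E_nf_n$ is uniformly Lipschitz, whose only delicate point (continuity at the accumulation point $q$) you settle correctly via the decay $\|E_nf_n\|_\infty\leq\rho^n\diam(P)\sup_m\|f_m\|$ together with Lemma~\ref{LipschitzLemma}. Two cosmetic slips, neither affecting correctness: compactness does \emph{not} imply the doubling property for general metric spaces --- what you need, and what your constant $\lambda(d)$ suggests you mean, is that $S\cup\{0\}\subset\R^d$ and doubling passes to subsets; and the homothety identifies $\Lip_0(P_n)$ with $\Lip_0(P)$ isometrically only after rescaling by $\rho^{-n}$, which is harmless since the rescaled map is an isometric isomorphism.
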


\begin{proof} Let us first prove that $\Lip_0(P)$ is \wtow isomorphic to a weak$^*$-com\-ple\-men\-ted subspace of~$\Lip_{0,S}(P)$. For this, note that, for $\alpha\in(0,1)$, $\Lip_0(P)$ and $\Lip_0(\alpha P)$ are \wtow isomorphic ($P$ and $\alpha P$ are bi-Lipschitz equivalent). Hence, it is enough to prove that $\Lip_0(\alpha P)$ is \wtow isomorphic to a weak$^*$-complemented subspace of~$\Lip_{0,S}(P)$. Now, $\alpha P$ is a Lipschitz retract of $P$ (via the nearest point projection) and it has positive distance to the boundary $\partial P$ of $P$. So, by Lemma~\ref{lem:ExtensionOperatorRetract} for $\e ={\rm dist} (\partial P,\alpha P)/2>0$ there is a bounded, \wtow continuous, linear extension operator
\[
E_{\alpha P,\e}\colon \Lip_0(\alpha P) \to \Lip_0(P)
\]
such that $E_{\alpha P,\e} f(x)=0$ for all $x\in \partial P$ (so, the image of $E_{\alpha P,\e}$ is contained in $\Lip_{0,S}(P)$). As we observed in Section~\ref{sec:preliminaries}, this gives the desired embedding of $\Lip_0(\alpha P)$ into $\Lip_{0,S}(P)$.

Next, we show that $\Lip_{0,S}(P)$ is a weak$^*$-complemented subspace of $\Lip_0(P)$. Consider the subset $S\cup\{0\}$ of $P$ and observe that $P$ is a doubling metric space (as the doubling property passes to subspaces and $\R^d$ is doubling). Therefore, we can apply Lee's and Naor's extension result \cite{LeeNaor} (see Section~\ref{sec:preliminaries}) and find a \wtow continuous linear extension operator $E\colon \Lip_0(S\cup\{0\})\to \Lip_0(P)$. Then the map
\[
Q\colon \Lip_0(P)\to\Lip_{0,S}(P)\colon f\mapsto f - E(f|_{S\cup\{0\}})
\]
is a linear projection onto $\Lip_{0,S}(P)$ since by definition $Qf$ vanishes on $S$ and for each $f\in\Lip_{0,S}(P)$, $E(f|_{S\cup\{0\}})=0$. Hence $\Lip_{0,S}(P)$ is a complemented subspace of $\Lip_0(P)$. In addition, $Q$ is \wtow continuous, since $E$ is.
	
Finally, by standard duality, the assertions proved in the previous paragraphs yield that $\Lip_{0,S}(P)$ is the dual to a complemented subspace $Z$ of $\F(P)$ and $\F(P)$ is isomorphic to a complemented subspace of $Z$. Moreover, $P$ has nonempty interior in $\R^d$, hence combining \cite[Corollary~3.5]{K2014} and \cite[Theorem~3.1]{K2014} we obtain that $\F(P)$ is isomorphic to its $\ell_1$-sum. Therefore, Pe{\l}czy\'{n}ski decomposition method assures us that $\F(P)$ is isomorphic to $Z$ and passing to the duals we reach the sought conclusion.
\end{proof}

\section{Extension operators and proof of the main result}\label{sec:main_result}
In this section, we construct a number of extension operators which are the central tools for the proof of the main result. We are using a regular orthogonal tiling and hence we consider $\H^d$ only for $d=2,3,4$. Given a regular orthogonal tiling $\mathcal P:=(P_n)_{n\in\N}$  by isometric copies of a single polytope, we denote by $p_n$ the centre point of $P_n$ in the sense of~\eqref{T3-incentre} of Section~\ref{ssec:Tiling} and consider the net $\mathcal{N}:=\{p_n\colon n\in\N\}$. For the space~$\Lip_0(P_n)$ we use $p_n$ as the distinguished point and for the sake of simplicity, we assume that $p_1=0$.

Using these data we use the following strategy to prove the main result: In Section~\ref{ssec:NetExtension} we use an extension operator from the net~$\mathcal{N}$ to decompose Lipschitz functions on~$\H^d$ into Lipschitz functions on~$\mathcal{N}$ and (bounded) Lipschitz functions vanishing on $\mathcal{N}$. In Section~\ref{ssec:Decomposition} we decompose the latter functions into a sequence of Lipschitz functions on the tiles. In Section~\ref{ssec:ExtensionFromTiles} we use extension operators from the tiles to construct the inverse operator of this decomposition operator. Finally, in Section~\ref{ssc:MainProofEnd} we combine these arguments to finish the proof and state some consequences.

\subsection{Extension from the net \texorpdfstring{$\mathcal N$}{N}}\label{ssec:NetExtension}
In this part we construct a bounded, \wtow continuous linear extension operator from the net $\mathcal{N}$ to $\H^d$. In order to achieve this, we will exploit a Lipschitz partition of unity in the spirit of \cite{AACD, BMS, JLS}. Notice that said results cannot be applied directly, because $\H^d$ is not a doubling metric space. On the other hand, the specific shape of the net, inherited from the tiling, allows a very simple construction of the partition of unity.

Since the polytopes $P_n$ are mutually isometric, the in-radius ${\rm dist}(p_n, \partial P_n)$ of $P_n$ is the same for all $n$; thus we can set $\delta\coloneqq {\rm dist}(p_n, \partial P_n)>0$. Therefore, when $k\neq n$, ${\rm dist}(p_k,P_n)\geq \delta$. Define functions $\varrho_n\colon \H^d \to \R$ by $\varrho_n(x)\coloneqq \max\left\{1-\frac{{\rm dist}(x,P_n)}{\delta},0 \right\}$. Then $\varrho_n(p_k)= \delta_{k,n}$ and $\varrho_n$ is $1/\delta$-Lipschitz. Moreover, the restriction of $\varrho_n$ to $P_k$ is nonzero only when $P_k$ intersects $P_n$. Hence, by \eqref{T4-intersections} the series $\sum_{n=1}^\infty \varrho_n$ is locally a sum of at most $N(d)$ terms and therefore it defines a bounded Lipschitz function. Additionally, we have $\sum_{n=1}^\infty \varrho_n(x)\geq 1$ for all $x\in \H^d$, as $\varrho_n$ equals $1$ on $P_n$. Consequently, the functions
\[
\p_n\coloneqq \frac{\varrho_n}{\sum_{n=1}^\infty \varrho_n}
\]
are uniformly Lipschitz, with constant, say, $L_\mathcal{N}$. Clearly, $(\p_n)_{n=1}^\infty$ is the desired partition of unity and $\p_n(p_k)=\delta_{k,n}$. We can now proceed to the construction of the extension operator from the net $\mathcal{N}$.

\begin{lemma}\label{lem:extension-operator-net} The operator $E_{\mathcal N}$ defined as
\begin{align*}
    E_{\mathcal N}\colon \Lip_0(\mathcal N)&\to \Lip_0(\H^d)\colon
    f\mapsto \sum_{n=1}^\infty f(p_n)\varphi_n
\end{align*}
is a bounded linear extension operator from the net $\mathcal N$ to $\H^d$ which is \wtow continuous.
\end{lemma}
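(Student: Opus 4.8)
The plan is to verify, in turn, that $E_{\mathcal N}$ is a well-defined linear extension operator, that it is bounded, and that it is \wtow continuous. Linearity is immediate from the formula. For the extension property I would use the relation $\varphi_n(p_k)=\delta_{k,n}$ established above to compute $(E_{\mathcal N}f)(p_k)=\sum_n f(p_n)\varphi_n(p_k)=f(p_k)$ for every $k$; in particular, since $p_1=0$ and $f(p_1)=0$ for $f\in\Lip_0(\mathcal N)$, the image indeed lies in $\Lip_0(\H^d)$. Well-definedness as a continuous function is free: by property \eqref{T4-intersections}, on each tile $P_k$ only the finitely many $\varphi_n$ with $P_n\cap P_k\neq\emptyset$ are nonzero, so the series is locally a finite sum of continuous functions.

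The heart of the matter, and the step I expect to be the main obstacle, is the uniform Lipschitz estimate, since the coefficients $f(p_n)$ are unbounded. The trick I would use is the partition-of-unity identity $\sum_n(\varphi_n(x)-\varphi_n(y))=0$: fixing a tile $P_k$ and points $x,y\in P_k$, this lets me rewrite
\[
(E_{\mathcal N}f)(x) - (E_{\mathcal N}f)(y) = \sum_n \bigl(f(p_n) - f(p_k)\bigr)\bigl(\varphi_n(x) - \varphi_n(y)\bigr),
\]
thereby replacing each $f(p_n)$ by the difference $f(p_n)-f(p_k)$. On $P_k$ the sum ranges over the at most $N(d)$ indices $n$ with $P_n\cap P_k\neq\emptyset$; for each such $n$ the centres $p_n,p_k$ lie in intersecting (hence isometric) tiles, so $\rho(p_n,p_k)\leq 2\diam(P_1)$ and therefore $|f(p_n)-f(p_k)|\leq 2\diam(P_1)\Lip(f)$, while $|\varphi_n(x)-\varphi_n(y)|\leq L_{\mathcal N}\rho(x,y)$. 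This gives $\Lip\bigl((E_{\mathcal N}f)|_{P_k}\bigr)\leq 2N(d)\diam(P_1)L_{\mathcal N}\Lip(f)$, uniformly in $k$. Since $\H^d$ is geodesic and covered by the tiles $(P_k)_{k\in\N}$, Lemma~\ref{LipschitzLemma} globalises this estimate to $\Lip(E_{\mathcal N}f)\leq 2N(d)\diam(P_1)L_{\mathcal N}\Lip(f)$, establishing boundedness.

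Finally, for \wtow continuity I would invoke the criterion recalled in Section~\ref{sec:preliminaries}: a bounded operator between $\Lip_0$-spaces is \wtow continuous if and only if it is pointwise-to-pointwise continuous. Continuity into the topology of pointwise convergence may be checked coordinatewise, so it suffices to note that for each fixed $x\in\H^d$ the functional $f\mapsto (E_{\mathcal N}f)(x)=\sum_n f(p_n)\varphi_n(x)$ is, again by \eqref{T4-intersections}, a \emph{finite} linear combination of the evaluation functionals $f\mapsto f(p_n)$, each of which is pointwise continuous on $\Lip_0(\mathcal N)$ by the very definition of the pointwise topology. Hence $E_{\mathcal N}$ is pointwise-to-pointwise continuous, and combined with the boundedness proved above this yields the desired \wtow continuity.
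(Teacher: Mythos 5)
Your proposal is correct and follows essentially the same route as the paper's proof: the same partition-of-unity cancellation trick of replacing $f(p_n)$ by $f(p_n)-f(p_k)$ to exploit $\sum_n\bigl(\varphi_n(x)-\varphi_n(y)\bigr)=0$, the same resulting constant $2N(d)\diam(P_1)L_{\mathcal N}$, and Lemma~\ref{LipschitzLemma} to globalise the tile-wise estimate. The weak$^*$-to-weak$^*$ continuity is also handled identically, via local finiteness and the pointwise-to-pointwise criterion.
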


\begin{proof}
    The sum defining $E_{\mathcal N}f$ is locally finite, thus $E_{\mathcal N}$ is clearly a well-defined linear extension operator and $E_\mathcal{N}f$ is continuous. In order to bound the Lipschitz constant of $E_{\mathcal N}f$, we use Lemma~\ref{LipschitzLemma}. Thus, fix $f\in \Lip_0(\mathcal N)$ and $x,y\in P_n$, for some $n\in \N$. Let $\{n_1,\dots, n_{N(d)}\}$ be the set of indices corresponding to the polytopes $P_k$ that intersect $P_n$. Hence we have
    \begin{align*}
    |E_\mathcal{N}f(x) - E_\mathcal{N}f(y)| =&
    \left|\sum_{j=1}^{N(d)} \big(\p_{n_j}(x) - \p_{n_j}(y)\big) f(p_{n_j})\right|\\
    =& \left|\sum_{j=1}^{N(d)}  \big(\p_{n_j}(x) - \p_{n_j}(y)\big) \big(f(p_{n_j})-f(p_n)\big)\right|\\
    \leq& \sum_{j=1}^{N(d)} L_\mathcal{N}\ \rho(x,y)\ \Lip(f)\ \rho(p_{n_j},p_n)\leq C\ \Lip(f)\ \rho(x,y).
    \end{align*}
    Here, $C= 2\diam(P_n)\ L_\mathcal{N}\ N(d)$ and we used the fact that $\rho(p_{n_j},p_n)\leq 2\diam(P_n)$, since $P_{n_j}\cap P_n\neq\emptyset$. According to Lemma~\ref{LipschitzLemma}, this inequality proves that $\Lip(E_\mathcal{N}f)\leq C\Lip (f)$, whence the boundedness of $E_\mathcal{N}$. 

    Lastly, to prove \wtow continuity, we need only show pointwise-to-pointwise continuity. So assume that $(f_k)_{k\in\N}$ is a sequence of Lipschitz functions converging pointwise to some function $f\in\Lip_0(\mathcal N)$. Then, using again local finiteness of the sum,
    \begin{align*}
        \lim_{k\to\infty}E_{\mathcal N}(f_k)(x) &= \sum_{n=1}^\infty \lim_{k\to\infty}f_k(p_n) \p_n(x) = \sum_{n=1}^\infty f(p_n) \p_n(x) = E_{\mathcal N}(f)(x).\qedhere
    \end{align*}
\end{proof}

\subsection{Decomposition into Functions on Tiles}\label{ssec:Decomposition}
The aim of this section is to decompose a Lipschitz function on $\H^d$ into a sequence of functions defined on the tiles of our tiling. As a first step, we use the results of the previous section to remove the part of the function defined on the net~$\mathcal{N}$. 
More precisely, Lemma~\ref{lem:extension-operator-net} implies that the mapping
\[
\Lip_0(\H^d)\to \Lip_0(\mathcal{N}) \oplus \Lip_{0,\mathcal{N}} (\H^d)\colon \qquad f \mapsto \big(f|_\mathcal{N}, f-E_{\mathcal{N}}(f|_\mathcal{N}) \big)
\]
is a \wtow continuous isomorphism (with inverse $(g,h)\mapsto E_\mathcal{N}(g) + h$). 

Hence, our goal now is to decompose a function in~$\Lip_{0,\mathcal{N}}(\H^d)$ into a sequence of functions defined on the tiles. We start with some geometric preliminaries.

\begin{definition}
    Let $H$ be a hyperplane in $\H^d$ not containing the origin~$0$. The two connected components of $\H^d\setminus H$ are called the \emph{open half-spaces} defined by $H$. We denote their closures by $H^+$ and $H^-$ where $H^+$ is chosen so that $0$ is in the interior of~$H^+$.
\end{definition}

Let us now fix some notation and parameters. Let $\mathcal P$ be our tiling of $\H^d$ and $(H_m)_{m\in\N}$ be the sequence of hyperplanes supporting the faces of the tiles $P_n$ as in Lemma~\ref{lem:tiles-form-hyperplanes}. Let $\e>0$ be a fixed number which is smaller than the in-radius of $P_m$ and smaller than half of the minimal distance of non-adjacent faces of $P_m$. For the hyperplane $H_m$, we denote by $R_m\colon \H^d\to \H^d$ the reflection across $H_m$. 

In order to start the construction of the decomposition, for each hyperplane $H_n$ we define a cutoff function
\[
\psi_n\colon \H^d\to [0,1]\colon \qquad x\mapsto \max\left\{1-\frac{ {\rm dist}(x,H_n)}{\e},0\right\}
\]
which clearly satisfies $\Lip(\psi_n)\leq 1/\e$, $\psi_n(x)=1$ if and only if $x\in H_n$, and $\psi_n(x)=0$ if and only if $\dist(x,H_n)\geq \e$. 

We now construct operators on the space $(\bigoplus_{m\in\N}\Lip_0(P_m))_{\ell_\infty}$. In order to simplify the notation, we often interpret this sum as the space of all functions $g$ defined on $\bigcup_{m\in\N} {\rm int}(P_m)\subset \H^d$ which are Lipschitz on every ${\rm int}(P_m)$ with $\sup_{m\in \N} \Lip(g|_{P_m})< \infty$ and that vanish on the net~$\mathcal{N}$. More precisely, to a sequence $(g_m)_{m\in \N}$ we associate the function $g$ defined to be equal to $g_m$ on ${\rm int}(P_m)$, for every $m\in\N$; vice-versa, to a function $g$ we associate the sequence $(g_m)_{m\in \N}$, where $g_m$ is the unique continuous extension of $g|_{{\rm int} (P_m)}$ to $P_m$. The important advantage of this interpretation is that we do not consider values on the boundaries of the tiles, which simplifies several formulas. As a first instance of this, this interpretation allows us to identify $\Lip_{0,\mathcal{N}}(\H^d)$ with the subspace of all functions admitting a continuous extension to~$\H^d$.

With these preparations, we are now able to define the operator
\[
\chi_n\colon \Big(\bigoplus_{m\in\N}\Lip_0(P_m)\Big)_{\ell_\infty} \to \Big(\bigoplus_{m\in\N}\Lip_0(P_m)\Big)_{\ell_\infty}
\]
by setting
\[
(\chi_n g)(x) = \begin{cases} g(x) & \text{for}\;x\in H_n^{-}, \\ g(x) - \psi_n(x) g(R_n(x)) &\text{for}\; x\in H_n^+.
\end{cases}
\]
Notice that in the definition of $\chi_n$ we used the interpretation explained above. Also, according to Lemma~\ref{lem:tiles-form-hyperplanes}, $R_n$ maps $\bigcup_{m\in\N} {\rm int}(P_m)$ to itself, thus we can legitimately evaluate $g$ at the point $R_n(x)$.

Since reflections are isometries and $g$ is bounded, the operator $\chi_n$ is bounded and it is clearly also \wtow continuous. Also note that since $\chi_n$ does not change the values of~$g$ on~$H_n^-$ and $R_n(x)\in H_n^-$ whenever $x\in H_n^+$, its inverse is the operator defined by
\[
(\chi_n^{-1} g)(x) = \begin{cases} g(x) & \text{for}\;x\in H_n^{-}, \\ g(x) + \psi_n(x) g(R_n(x)) &\text{for}\; x\in H_n^+.
\end{cases}
\]
We denote by $\chi_{1,\ldots,n}:=\chi_n\circ\cdots\circ \chi_1$ the composition of $\chi_1,\ldots, \chi_n$ and note that it satisfies $\chi_{1,\ldots,n}^{-1} = \chi_1^{-1}\circ\cdots\circ\chi_n^{-1}$.

Given a function $g\in \Lip_{0,\mathcal N}(\H^d)$, we define a function $g_m\colon P_m\to \R$ by
\[
    g_m(x) := \lim_{n\to\infty} \chi_{1,\ldots,n}(g)(x) \qquad (x\in P_m).
\]
Note that for each $m$ there is an $N_m$ such that $\chi_{1,\ldots,n}(g)(x)$ does not change for $n\geq N_m$. Hence, the above limit exists since the sequence is eventually constant and the index where it becomes constant only depends on $P_m$ and not on the individual point $x\in P_m$.

Let us now consider the linear mapping 
\[
\Lip_{0,\mathcal N}(\H^d)\to \Big(\bigoplus_{m\in\N}\Lip_0(P_m)\Big)_{\ell_\infty}\colon \qquad g\mapsto (g_m)_{m\in\N}
\]
which will eventually be the main ingredient for our isomorphism. Since it is not surjective between the above spaces, we have to determine a suitable codomain. With this in mind, for each $m\in \N$ we define
\begin{equation}\label{eq:Sm}
S_m:=\bigcup_{\substack{n\in\N\\P_m\subset H_n^+}}H_n\cap P_m.    
\end{equation}
Now we are able to define the desired operator
\[
\Phi\colon \Lip_{0,\mathcal N}(\H^d)\to \Big(\bigoplus_{m\in\N}\Lip_{0,S_m}(P_m)\Big)_{\ell_\infty}\colon g \mapsto (g_m)_{m\in\N}   
\]
and start checking that it has the desired properties.

\begin{lemma}\label{lem:Phi well def} The linear operator $\Phi$ maps $\Lip_{0,\mathcal N}(\H^d)$ into $\big(\bigoplus_{m\in\N}\Lip_{0,S_m}(P_m)\big)_{\ell_\infty}$. Moreover, it is bounded and \wtow continuous.
\end{lemma}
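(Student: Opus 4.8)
The plan is to verify three things: that each coordinate $g_m$ lies in $\Lip_{0,S_m}(P_m)$, that $\sup_m \Lip(g_m) \le C\,\Lip(g)$ with a constant $C$ independent of $g$ and $m$, and that $\Phi$ is pointwise-to-pointwise continuous (whence \wtow continuous by the Banach--Dieudonn\'e theorem, as recorded in Section~\ref{sec:preliminaries}). First I would record three preliminary facts. Since $g$ vanishes on the net $\mathcal N$ and every point of $\H^d$ lies in some tile, one has $\|g\|_\infty \le \diam(P)\,\Lip(g)$, so $g$ is bounded. Because the tiling is proper and $\bigcup_n \partial P_n = \bigcup_k H_k$ by Lemma~\ref{lem:tiles-form-hyperplanes}, the interior $\mathrm{int}(P_m)$ meets no $H_n$, so it lies strictly on one open side of each hyperplane and the case distinction defining $\chi_n$ is unambiguous on each tile. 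Finally, the choice $\e<$ in-radius and $\e<$ half the minimal distance between non-adjacent faces guarantees that the only hyperplanes meeting the $\e$-neighbourhood of $P_m$ are those supporting its faces, and that two face-hyperplanes both within $\e$ of a common point must be adjacent, hence orthogonal by the right-angle property.

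The crux, and the main obstacle, is the \emph{uniform} Lipschitz bound, since a priori $g_m = \chi_{1,\ldots,N_m}(g)|_{P_m}$ is a composition of $N_m\to\infty$ operators. The key is that this composition is \emph{local}. Unwinding the definitions, for $x\in\mathrm{int}(P_m)$ the value $g_m(x)$ equals $g(x)$ minus a sum, over the finitely many face-hyperplanes $H_n$ with $P_m\subset H_n^+$, of terms $\psi_n(x)\,\chi_{1,\ldots,n-1}(g)(R_n(x))$; expanding each inner factor recursively produces a finite tree of terms of the shape $\pm\big(\prod\psi\big)\cdot(g\circ R)$, where $R$ is a composition of reflections. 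A term survives only if the running point lies within $\e$ of each reflecting hyperplane along its branch; by the preliminary fact this forces the hyperplanes along a branch to be pairwise adjacent, hence to share a common vertex. As a right-angled polytope is simple, at most $d$ facets meet at a vertex, so every branch has length $\le d$ and $R$ lies in the finite reflection group of that vertex; in particular $R$ maps $P_m$ to one of the (at most $2^d$) tiles around the vertex. Since all tiles are isometric, the number of terms and the combinatorics are the same for every $m$. Each term is thus a product of boundedly many factors, each either a $1/\e$-Lipschitz, $[0,1]$-valued cutoff or a composition $g\circ R$ with $R$ an isometry (so $\Lip(g\circ R)=\Lip(g)$ and $\|g\circ R\|_\infty=\|g\|_\infty$). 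The product rule for Lipschitz functions, together with $\|g\|_\infty\le\diam(P)\Lip(g)$, bounds each term and hence $g_m$ in $\Lip_0(P_m)$ by $C\,\Lip(g)$ with $C=C(d,P,\e)$ independent of $m$. This gives both that $\Phi g$ belongs to the $\ell_\infty$-sum and that $\Phi$ is bounded.

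Next I would check that $g_m\in\Lip_{0,S_m}(P_m)$. As the centre $p_m$ has distance equal to the in-radius $>\e$ from every face, all cutoffs vanish at $p_m$, so $g_m(p_m)=g(p_m)=0$. For the vanishing on $S_m$ I would prove by induction on $n$ the invariant that $\chi_{1,\ldots,n}(g)$ vanishes on $H_k\cap P_j$ for every tile $P_j$ and every $k\le n$ with $P_j\subset H_k^+$. The step $k=n$ uses that $\chi_{1,\ldots,n-1}(g)$ admits matching continuous extensions from the two tiles sharing the face $H_n\cap P_j$; this continuity across $H_n$ is preserved by each earlier $\chi_t$ because, wherever $\psi_t$ is active near $H_n$, the hyperplane $H_t$ is orthogonal to $H_n$, so by Lemma~\ref{lem:orthogonal-reflections-commute} the reflection $R_t$ fixes $H_n$ setwise and preserves its two half-spaces. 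Applying $\chi_n$ then subtracts the common boundary value from itself, giving $0$ on $H_n\cap P_j$. For $k<n$ one verifies that applying $\chi_n$ does not destroy the already-established vanishing, again because $R_n$ preserves $H_k$ and the half-space $H_k^+$ when $H_n\perp H_k$. Taking $n\ge N_m$ yields $g_m|_{S_m}=0$.

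Finally, \wtow continuity reduces to pointwise-to-pointwise continuity on bounded sets. If $g^{(i)}\to g$ pointwise with $\sup_i\Lip(g^{(i)})<\infty$, then for each fixed $m$ and each $x\in P_m$ the value $g_m^{(i)}(x)$ is the \emph{same} finite expression in the values of $g^{(i)}$ at the boundedly many reflected points, with the fixed cutoff coefficients; hence $g_m^{(i)}(x)\to g_m(x)$. Combined with the uniform bound from the second paragraph, this shows that $\Phi$ is bounded and pointwise-to-pointwise continuous, and therefore \wtow continuous.
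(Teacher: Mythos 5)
Your proof is correct and follows the same skeleton as the paper's: locality of the operators $\chi_n$ (each acts only on the $\e$-neighbourhood of $H_n$, which meets a tile only if $H_n$ supports one of its faces), vanishing at $p_m$ because the centre lies further than $\e$ from every face, an induction for the vanishing on $S_m$ whose two cases (orthogonal versus non-orthogonal $H_k$ and $H_n$) are resolved exactly as in the paper via Lemma~\ref{lem:orthogonal-reflections-commute} and the choice of $\e$, and weak$^*$ continuity reduced to pointwise-to-pointwise continuity. The genuine difference lies at what you rightly call the crux: the paper disposes of the uniform bound in one line, $\Lip(\Phi(g)_m)\le\|\chi_1\|^{N(d)}\Lip(g)$, on the grounds that at most $N(d)$ of the operators $\chi_n$ act nontrivially on a given tile; your finite tree expansion is a more scrupulous version of this, since it also tracks the fact that the values $\chi_n$ imports from the reflected tile have themselves been modified by earlier operators --- a dependency the paper's counting does not make explicit. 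Likewise, your observation that the base step of the $S_m$-induction requires $\chi_{1,\ldots,n-1}(g)$ to match across $H_n$ (preserved because any reflection active near $H_n$ is across a hyperplane orthogonal to $H_n$) fills in a point the paper leaves implicit. So your route buys rigour at the two places where the paper is terse, at the cost of heavier bookkeeping.

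One sub-claim should be repaired, though it is inessential: you assert that the hyperplanes along a surviving branch are pairwise adjacent and \emph{hence share a common vertex}, and then invoke simplicity to bound the branch length by $d$. That pairwise intersecting facets of a right-angled polytope have a common point is a nontrivial fact which you neither prove nor cite. You do not need it: along a branch the running point stays within $\e$ of \emph{all} previously used hyperplanes (a reflection across a hyperplane orthogonal to $H$ preserves the distance to $H$, by Lemma~\ref{lem:orthogonal-reflections-commute}), so by your choice of $\e$ all hyperplanes of a branch support pairwise distinct facets of the tile containing the current running point; hence the branch length is at most the number of facets of a single tile, which is all the uniformity your constant requires. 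Alternatively, pairwise orthogonal, pairwise intersecting hyperplanes in $\H^d$ number at most $d$, since in the hyperboloid model their unit normals are pairwise orthogonal spacelike vectors spanning a positive-definite subspace of $\R^{d+1}$.
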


\begin{proof}
    We first check that $\Phi(g)$ is a sequence of Lipschitz functions whose Lipschitz constants are bounded by a multiple of the Lipschitz constant of $g$. Since $g\circ R_n$ and $\psi_n$ are Lipschitz and bounded, the standard product formula gives us that $\chi_n$ is bounded with $\|\chi_n\|= \|\chi_1\|$ for all $n$. Moreover, $\chi_n$ only changes its argument on an $\e$-neighbourhood of $H_n$, and $P_m$ intersects said neighbourhood if and only if $H_n$ supports a face of $P_m$. Hence, the number of faces $N(d)$ of the polytopes $P_m$ gives us the number of times the function $g$ can at most be changed on that tile during the limit process that produces $\Phi(g)$. Thus, the Lipschitz constant of $\Phi(g)_m$ is at most 
    \[
    \Lip(\Phi(g)_m)\leq \|\chi_1\|^{N(d)} \Lip(g).
    \]
    Also, $\Phi(g)(p_m)=g(p_m)=0$ because $p_m\in\mathcal N$ and the centre-point $p_m$ of any $P_m$ is not in an $\e$-neighbourhood of any $H_n$. It follows that $\Phi(g)_m\in\Lip_0(P_m)$. Note that the above inequality also shows the boundedness of $\Phi$. By the same reason, the \wtow continuity of $\chi_n$ implies that $\Phi$ is also \wtow continuous.

    To conclude, we need to check that $(\Phi(g))_m$ vanishes on $S_m$. For this aim, assume $x\in P_m\cap H_n$ and $P_m\subset H_n^+$ and observe that
    \[
    \chi_{1,\ldots,n}(g)(x)= \chi_{1,\ldots,n-1}(g)(x) - \underset{=1}{\underbrace{\psi_n(x)}}(\chi_{1,\ldots,n-1}(g)(x))(\underset{=x}{\underbrace{R_nx}}) = 0.
    \]
    So at the $n$-th step of our limit process, the values of our function $\chi_{1,\dots,n}g$ are set to zero everywhere along $H_n$ on the tiles that lie within $H_n^+$. To express this briefly, we say that $\chi_{1,\dots,n}g$ vanishes on $\partial H_n^+$. We now have to show that this value of zero is retained by all the subsequent functions $\chi_{1,\dots,k}g$, $k>n$. We will do this by induction.

    So, fix $k>n$ and assume that $\chi_{1,\dots,k-1}g(x)=0$ for $x\in \partial H_n^+$. To show that $\chi_{1,\dots,k}g(x)=0$ as well, we distinguish two cases. If $H_k$ and $H_n$ are not orthogonal, it follows from our choice of $\e$ that $\psi_k$ is zero on $H_n$. Hence, $\chi_{1,\dots,k}g(x)=\chi_{1,\dots,k-1}g(x)=0$ for $x\in \partial H_n^+$.
    On the other hand, if $H_k$ is orthogonal to $H_n$, Lemma~\ref{lem:orthogonal-reflections-commute} implies that $R_k[H_n]=H_n$ and $R_k[H_n^+]= H_n^+$. Therefore, $\chi_{1,\dots,k-1}g(R_k x)=0$ when $x\in \partial H_n^+$. As before, it follows that $\chi_{1,\dots,k}g$ vanishes on $\partial H_n^+$, which finishes the proof.
\end{proof}

In the following subsection we will construct an inverse operator for $\Phi$, thus showing that $\Phi$ is an isomorphism.

\subsection{Extension Operators from the Tiles and inverse of \texorpdfstring{$\Phi$}{Phi}}\label{ssec:ExtensionFromTiles}
The goal of this section is to construct a linear extension operator from $\Lip_{0,S_m}(P_m)$ to $\Lip_{0,\mathcal{N}}(\H^d)$ which works well with the decomposition considered in the previous section and allows us to show that the operator $\Phi$ is an isomorphism. The interpretation of $(\bigoplus_{k\in\N}\Lip_{0}(P_k))_{\ell_\infty}$ and its subspace $(\bigoplus_{k\in\N}\Lip_{0,S_k}(P_k))_{\ell_\infty}$ considered in the previous section allows us to view the space $\Lip_{0,S_m}(P_m)$ as a subset of $(\bigoplus_{k\in\N}\Lip_{0,S_k}(P_k))_{\ell_\infty}$, by setting the function equal to zero outside of $P_m$. 

For the construction of the extension operator for $\Lip_{0,S_m}(P_m)$, let $n_1>\dotsb>n_s$ be the natural numbers such that the hyperplane $H_{n_j}$ supports a face of $P_m$ which is not contained in $S_m$. As a first step we define the operator
\[
\Lip_{0,S_m}(P_m)\to\left(\bigoplus_{m\in\N}\Lip_{0}(P_m)\right)_{\ell_\infty}\colon \qquad h \mapsto(\chi_{n_s}^{-1}\circ \dots\circ \chi_{n_1}^{-1})(h).
\]
The next lemma shows that $(\chi_{n_s}^{-1}\circ \dots\circ \chi_{n_1}^{-1})(h)$ admits a continuous extension to~$\H^d$ and hence we may define the extension operator
\[
E_m\colon \Lip_{0,S_m}(P_m)\to \Lip_{0,\mathcal{N}}(\H^d)\colon \qquad h\mapsto E_mh.
\]
where $E_mh$ is the unique continuous extension of~$(\chi_{n_s}^{-1}\circ \dots\circ \chi_{n_1}^{-1})(h)$ to~$\H^d$.

\begin{lemma}\label{lem:E_m-well-defined}
    The operator $E_m$ is well-defined, bounded, linear, and \wtow continuous. Moreover, $\|E_m\|$ is bounded by a constant depending only on $\e$, the number of faces of $P_m$ and the diameter of $P_m$.
\end{lemma}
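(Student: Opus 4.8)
The plan is to define $E_m$ by applying the composition $F_m := \chi_{n_s}^{-1}\circ\dots\circ\chi_{n_1}^{-1}$ to the zero-extension of $h$ (viewed inside the big $\ell_\infty$-sum) and then passing to the continuous extension, and to verify the asserted properties essentially one at a time, with the bulk of the effort concentrated in the continuity of the resulting function across tile boundaries. First I would record that each $\chi_n^{-1}$ is a bounded, \wtow continuous operator on $\big(\bigoplus_{k\in\N}\Lip_0(P_k)\big)_{\ell_\infty}$, exactly as was observed for $\chi_n$ in Section~\ref{ssec:Decomposition}; here no continuity across tiles is needed, since the norm is computed componentwise. As the tiles are mutually isometric and the $R_n$ are isometries, the product formula yields a uniform bound $\|\chi_n^{-1}\|=\|\chi_1^{-1}\|$, where $\|\chi_1^{-1}\|$ depends only on $\e$ (through $\Lip(\psi_n)\leq 1/\e$) and on $\diam(P_m)$ (through the estimate $\|g\|_\infty\leq \diam(P_m)\Lip(g)$ valid for $g$ vanishing on $\mathcal N$). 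Hence $F_m$ is bounded with $\|F_m\|\leq \|\chi_1^{-1}\|^{s}$ and \wtow continuous into the big sum, and since $s$ is at most the number of faces of $P_m$, this already gives the claimed dependence of $\|E_m\|$ on $\e$, the number of faces, and $\diam(P_m)$.

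The heart of the argument, and the step I expect to be the main obstacle, is to show that $F_m h$, a priori defined only on $\bigcup_k {\rm int}(P_k)$, admits a continuous extension to $\H^d$; this is what makes $E_m$ well-defined and places its image in $\Lip_{0,\mathcal N}(\H^d)$. The two relevant geometric facts are that $P_m\subset H_{n_j}^-$ for every $j$, so that none of the operators $\chi_{n_j}^{-1}$ changes the values on $P_m$ itself (whence $E_m h|_{P_m}=h$, i.e.\ $E_m$ is genuinely an extension operator), and that $h$ vanishes on $S_m$, which by \eqref{eq:Sm} is precisely the union of the faces of $P_m$ lying on hyperplanes $H_n$ with $P_m\subset H_n^+$. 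Across such an $S_m$-face the zero-extension glues continuously, while across each $H_{n_j}$-face the term $\psi_{n_j}(x)\,g(R_{n_j}x)$ contributed by $\chi_{n_j}^{-1}$ tends to $h(x)$ as $x\to H_{n_j}$ from the $H_{n_j}^+$ side (there $\psi_{n_j}\to 1$ and $R_{n_j}x\to x\in P_m$), matching the value $h$ carries on the $P_m$ side. The delicate point is that the successive cutoff-reflection terms can overlap on tiles reached by compounding reflections, so I would verify continuity tile-by-tile by an induction on the number of operators applied, parallel to the induction in the proof of Lemma~\ref{lem:Phi well def}: when two of the involved hyperplanes are non-orthogonal the choice of $\e$ (smaller than half the minimal distance of non-adjacent faces) forces their cutoffs to have disjoint influence near the common face, and when they are orthogonal Lemma~\ref{lem:orthogonal-reflections-commute} ensures that each $R_{n_j}$ preserves the other hyperplane together with its two half-spaces, so the added terms stay compatible.

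Granting the continuous extension $E_m h$, the remaining properties follow quickly. Its restriction to every tile $P_k$ is Lipschitz with $\Lip(E_m h|_{P_k})\leq \|\chi_1^{-1}\|^{s}\Lip(h)$, and since $\H^d$ is geodesic and the tiles cover it, Lemma~\ref{LipschitzLemma} upgrades this to $\Lip(E_m h)\leq \|\chi_1^{-1}\|^{s}\Lip(h)$ on all of $\H^d$, giving boundedness with the stated norm bound. The function vanishes on $\mathcal N$ because each centre point $p_k$ lies at distance greater than $\e$ from every hyperplane $H_n$, so all cutoffs vanish at $p_k$ and $E_m h(p_k)$ reduces to the zero-extension of $h$ there, which equals $h(p_m)=0$ when $k=m$ and $0$ otherwise; in particular $E_m h(0)=E_m h(p_1)=0$, so indeed $E_m h\in\Lip_{0,\mathcal N}(\H^d)$. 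Finally, the \wtow continuity of $E_m$ follows from that of $F_m$ on the big $\ell_\infty$-sum together with the observation that passing to the unique continuous extension preserves pointwise convergence, so $E_m$ is pointwise-to-pointwise continuous and therefore \wtow continuous.
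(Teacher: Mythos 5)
Your proposal is correct and follows essentially the same route as the paper's proof: apply $\chi_{n_s}^{-1}\circ\dots\circ\chi_{n_1}^{-1}$ to the zero-extension of $h$, establish continuity of the result across tile boundaries by an induction whose cases (non-orthogonal hyperplanes handled by the choice of $\e$, orthogonal ones by Lemma~\ref{lem:orthogonal-reflections-commute}) are exactly those of the paper, and then get boundedness from Lemma~\ref{LipschitzLemma} together with $\|E_m\|\leq\|\chi_{n_s}^{-1}\circ\dots\circ\chi_{n_1}^{-1}\|$, which depends only on $\e$, on $s$ (at most the number of faces), and on $\diam(P_m)$. The differences are cosmetic: you additionally record that $E_mh|_{P_m}=h$ and that $E_mh$ vanishes on $\mathcal{N}$, whereas the paper instead writes out the induction with the explicit hypothesis that each partial composition vanishes on the not-yet-reached region $C_k=\bigcup_{j>k}\mathrm{int}\bigl(H_{n_j}^{+}\bigr)$ and is continuous on its complement.
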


\begin{proof}
    In order to show that $E_m$ is well-defined, we have to show that~$g:=(\chi_{n_s}^{-1}\dots\chi_{n_1}^{-1})(h)$ admits a continuous extension to~$\H^d$. We show this in an inductive manner. Note that no hyperplane $H_n$ intersects the interior of $P_m$ and, by definition of~$S_m$ see~\eqref{eq:Sm}, $H_{n_j}^{+}$ is the half-space separated from $P_m$ by $H_{n_j}$ (otherwise $H_{n_j}$ would support a face contained in $S_m$, which it does not by definition of $n_1,\dots ,n_s$).
    
    Since we think of~$C_k:=\bigcup_{j=k+1}^{s} \mathrm{int}(H_{n_j}^{+})$ as the set to where we have not yet extended our function in the $k$-th step, we now show that $g_k:=(\chi_{n_{k}}^{-1}\circ \dots\circ \chi_{n_{1}}^{-1})(h)$ has an extension to $\H^d$ which is zero on $C_k$ and continuous on $\H^d\setminus C_k$. Since $g=g_s$ and $C_s=\emptyset$ this will prove our claim.
    
    The function $g_0=h$ is zero outside of $P_m$ and hence on $C_0$ and $(\H^d\setminus C_0)\setminus P_m$. Since the (relative) boundary of $P_m$ in the set $\H^d\setminus C_0$ is precisely $S_m$ and $h_m$ vanishes there, $g_0$ has a continuous extension to $\H^d\setminus C_0$.
    
    Let us assume now that we have already shown that $g_{k-1}$ has an extension to $\H^d$ which is zero on $C_{k-1}$ and continuous on $\H^d\setminus C_{k-1}$. By abuse of notation we call this extension $g_{k-1}$ as well. We now have to check that $g_{k}$ also has the desired properties.

    We first check that $g_k$ vanishes on $C_k$ and recall that $g_k(x)=g_{k-1}(x)$ for $x\in H_{n_k}^{-}$ and $g_{k}(x) = g_{k-1}(x) + \psi_n(x) g_{k-1}(R_{n_k}x)$ for $x\in H_{n_k}^+$. We distinguish between two cases: If for $j>k$ we have $H_{n_j}\cap H_{n_{k}}\neq \emptyset$, these hyperplanes are orthogonal and hence $x\in H_{n_j}^+$ if and only if $R_{n_{k}}x \in H_{n_j}^+$ and hence $g_k(x)=0$ for $x\in \mathrm{int}\,(H_{n_j}^+)$. For the other case note that $H_{n_{k}}$ cannot be in $H_{n_j}^+$ since otherwise the face of $P_m$ contained in $H_{n_{k}}$ would have to be in $H_{n_j}^+$ which is only possible if it was in $H_{n_j}$ which contradicts $H_{n_{k}}\cap H_{n_j}=\emptyset$. Hence, we have $\mathrm{dist}\,(H_{n_j}^+, H_{n_{k}})>2\varepsilon$ and therefore $g_k(x) = g_{k-1}(x)=0$ for $x\in H_{n_{s-k+1}}$.
   
    In order to show that $g_k$ has a continuous extension to $\H^d\setminus C_k$ let $z\in H_{n_k}\setminus C_k$ and note that $g_{k-1}(x)=0$ for $x\in\mathrm{int}(H_{n_k}^{-})$. Hence 
    \[
    \lim_{x\to z} g_{k}(x) = \lim_{x\to z} \psi_{n_k}(x) g_{k-1}(R_{n_k}(x)) = g_{k-1}(z)
    \]
    where $x$ converges to $z$ in the interior of $H_{n_k}^{-}$ since $\psi_{n_k}(z)=1$ and $R_{n_k}(z)=z$. This finishes the proof of the well-definedness of $E_m$.

    Linearity and pointwise-to-pointwise continuity are clear. Since $\chi_{n_s}^{-1}\dots\chi_{n_1}^{-1}$ is the composition of bounded operators on $(\bigoplus_{m\in\N}\Lip_{0}(P_m))_{\ell_\infty}$ and $E_mh_m$ is
    continuous, boundedness of $E_m$ follows from Lemma~\ref{LipschitzLemma} and $\|E_m\|\leq \|\chi_{n_s}^{-1}\circ \dots\circ \chi_{n_1}^{-1}\|$. Finally note that this norm only depends on $\e$, $s$ and the diameter of $P_m$ and that $s$ is at most the number of faces of~$P_m$.
\end{proof}

\begin{remark}\label{rem:ObservationEm}
    Note that applying $\chi_{n}^{-1}$ for $n$ distinct from $n_1,\ldots,n_s$ in definition of $E_m$ does not change anything as the function we are working with is zero on the interior of $H_n^{-}$, so instead of using $\chi_{n_s}^{-1}\circ\cdots\circ \chi_{n_1}^{-1}$ we can also use $\chi_{1,\ldots,n_1}^{-1}$ or even $\chi_{1,\ldots,k}^{-1}$ for every $k\geq n_1$.
\end{remark}

We are now in position to define the operator
\[
\Psi\colon \Big(\bigoplus_{m\in\N}\Lip_{0,S_m}(P_m)\Big)_{\ell_\infty} \longrightarrow \Lip_{0,\mathcal N}(\H^d)\colon \qquad (h_m)_{m\in\N}\mapsto \sum_{m\in\N}E_mh_m
\]
(where the sum is meant to be taken pointwise) and show that it is the inverse of $\Phi$.

\begin{proposition}\label{prop:PhiIsIsomorphism}
    $\Psi$ is well-defined, linear, bounded, \wtow continuous and the inverse of $\Phi$.
\end{proposition}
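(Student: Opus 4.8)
The plan is to verify the listed properties of $\Psi$ one at a time, leaving the inverse relations for last; throughout I use the big-space interpretation of Section~\ref{ssec:Decomposition}, identifying a sequence $(h_m)$ with the function $h$ equal to $h_m$ on each $\mathrm{int}(P_m)$, and I lean on Remark~\ref{rem:ObservationEm}, which lets me write $E_m h_m$ as the continuous extension of $\chi_{1,\ldots,k}^{-1}(\mathbf{1}_{P_m}h_m)$ for \emph{every} $k\geq n_1(m)$, not merely through the tile-dependent composition $\chi_{n_s}^{-1}\circ\cdots\circ\chi_{n_1}^{-1}$. First I would show $\Psi$ is well defined, linear and bounded. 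The crucial point is that $\sum_m E_m h_m$ is locally finite: since $\e$ is smaller than the in-radius and than half the distance between non-adjacent faces, each $\chi_{n_j}^{-1}$ alters a function only on the $\e$-neighbourhood of $H_{n_j}$ by reflecting across it, so $E_m h_m$ is supported on $P_m$ together with the finitely many tiles adjacent to it, and by~\eqref{T4-intersections} each fixed $P_\ell$ is met by at most $N(d)$ of the summands. Thus $\Psi((h_m))|_{P_\ell}$ is a finite sum, each term Lipschitz with constant $\leq\|E_m\|\,\|h_m\|$; as the tiles are isometric, $C:=\sup_m\|E_m\|<\infty$ by Lemma~\ref{lem:E_m-well-defined}, whence $\Lip(\Psi((h_m))|_{P_\ell})\leq N(d)\,C\,\|(h_m)\|_{\ell_\infty}$ uniformly in $\ell$. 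Lemma~\ref{LipschitzLemma} then yields a global Lipschitz bound, and since each $E_m h_m\in\Lip_{0,\mathcal N}(\H^d)$ the sum vanishes on $\mathcal N$; hence $\Psi$ maps into $\Lip_{0,\mathcal N}(\H^d)$ with $\|\Psi\|\leq N(d)\,C$, and linearity is immediate.

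For weak$^*$-to-weak$^*$ continuity I would, as recalled in Section~\ref{sec:preliminaries}, reduce to pointwise-to-pointwise continuity, using that the weak$^*$ topology of the $\ell_\infty$-sum---the dual of the $\ell_1$-sum of the relevant preduals of the $\Lip_{0,S_m}(P_m)$---agrees on bounded sets with coordinatewise pointwise convergence on the tiles. If a bounded net $(h^{(j)}_m)$ converges pointwise to $(h_m)$, then for fixed $x\in\mathrm{int}(P_\ell)$ the value $\Psi((h^{(j)}_m))(x)=\sum_m(E_m h^{(j)}_m)(x)$ is a finite sum whose index set is independent of $j$, and each $E_m$ is pointwise-to-pointwise continuous by Lemma~\ref{lem:E_m-well-defined}; letting $j\to\infty$ gives $\Psi((h_m))(x)$.

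The heart of the matter is $\Psi=\Phi^{-1}$, and the clean route is to recognise both maps as pointwise limits of the telescoping compositions. Combining local finiteness with Remark~\ref{rem:ObservationEm}, for each $P_\ell$ there is $k_0$ with $\Psi((h_m))=\chi_{1,\ldots,k}^{-1}(h)$ on $P_\ell$ for all $k\geq k_0$: for fixed $x$ and $k$ the operator $\chi_{1,\ldots,k}^{-1}$ consults only finitely many tiles, so linearity lets me collapse $\sum_m\chi_{1,\ldots,k}^{-1}(\mathbf{1}_{P_m}h_m)(x)$ into $\chi_{1,\ldots,k}^{-1}(h)(x)$. Dually, the definition of $g_m$ gives $\Phi(g)=\chi_{1,\ldots,k}(g)$ on $P_\ell$ for all $k$ beyond the stabilisation index of $P_\ell$. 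Fixing $P_\ell$, I would then select one $k$ so large that both stabilisations hold simultaneously on every one of the finitely many tiles that $\chi_{1,\ldots,k}^{\pm1}$ reads off when evaluated on $P_\ell$; since $\chi_{1,\ldots,k}^{-1}\circ\chi_{1,\ldots,k}=\mathrm{id}$ on the big space, this gives $\Psi(\Phi g)=\chi_{1,\ldots,k}^{-1}(\chi_{1,\ldots,k}(g))=g$ and, symmetrically, $(\Phi(\Psi((h_m))))_\ell=\chi_{1,\ldots,k}(\chi_{1,\ldots,k}^{-1}(h))=h_\ell$, both on $P_\ell$. As $\ell$ is arbitrary, $\Psi\Phi=\mathrm{id}$ and $\Phi\Psi=\mathrm{id}$.

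The hardest part is exactly this final synchronisation. The stabilisation index is tile-dependent, so one must first confirm that evaluating $\chi_{1,\ldots,k}^{\pm1}$ on a fixed $P_\ell$ involves only finitely many tiles (again from the smallness of $\e$, controlling how far each $\chi_n^{\pm1}$ propagates, and from~\eqref{T4-intersections}), and then that a single $k$ can be chosen large enough to freeze $\Phi$---or $\Psi$---on all of them at once; Remark~\ref{rem:ObservationEm} is what frees me from the tile-dependent composition order and makes this uniform choice legitimate. Once the simultaneous choice of $k$ is in place, the operator identity $\chi_{1,\ldots,k}^{-1}\chi_{1,\ldots,k}=\mathrm{id}$ supplies the cancellation and the remaining verifications are routine.
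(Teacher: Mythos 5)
Your proposal is correct and follows essentially the same route as the paper: local finiteness of $\sum_m E_m h_m$ plus Lemma~\ref{lem:E_m-well-defined} for boundedness, reduction to pointwise-to-pointwise convergence for weak$^*$ continuity, and the identification of both $\Phi$ and $\Psi$ with stabilised compositions $\chi_{1,\ldots,k}^{\pm 1}$ via Remark~\ref{rem:ObservationEm}, so that the inverse relations collapse to $\chi_{1,\ldots,k}^{-1}\circ\chi_{1,\ldots,k}=\mathrm{id}$ on each tile. If anything, you are more explicit than the paper about the one delicate point---choosing a single index $k$ that simultaneously freezes all the finitely many tiles consulted near a given $P_\ell$---which the paper compresses into the phrase ``there are numbers $M,N\in\N$ such that\dots''.
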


\begin{proof} The sum in the definition of $\Psi$ is locally finite with a uniform bound $N(d)$ on the number of summands; thus $\Psi$ is a bounded linear operator by Lemma~\ref{lem:E_m-well-defined}. Since \wtow continuity of $\Psi$ is easy to see, we are left to show that $\Psi$ is the inverse of $\Phi$.

We start by showing that $\Phi\circ\Psi = \mathrm{Id}$. For this, let $(h_m)_{m\in\N} \in (\bigoplus_{m\in\N}\Lip_{0,S_m}(P_m))_{\ell_\infty}$ and fix an arbitrary $k\in\N$. It is enough to check that $(\Phi(\Psi((h_m)_{m\in\N})))_k(x) = h_k(x)$ for $x$ in the interior of~$P_k$. Recall that in Section~\ref{ssec:Decomposition} we observed that there is an index $N_k\in\N$ such that $(\chi_{1,\ldots,n}g)(x)=(\chi_{1,\dots,N_k}g)(x)$ for $n\geq N_k$, every $g\in \Lip_{0,\mathcal{N}}(\H^d)$ and all $x\in P_k$. Using this observation together with the definition of these operators and Remark~\ref{rem:ObservationEm} we have
\begin{align*}
    (\Phi(\Psi((h_m)_{m\in\N})))_k(x) &= \Big(\Phi\Big(\sum_{m\in\N} E_mh_m\Big)\Big)_k(x) = \sum_{m\in\N}\chi_{1,\ldots,N_k}(E_mh_m)(x) \\
    &= \sum_{m\in\N} (\chi_{1,\ldots,N_k} (\chi_{1,\ldots,N_k}^{-1}(h_m))) (x) = \sum_{m\in\N} h_m(x) = h_k(x)
\end{align*}
for all $x\in \mathrm{int}\,(P_k)$, i.e. $(\Phi(\Psi((h_m)_{m\in\N})))_k=h_k$.

In order to show that $\Psi\circ\Phi = \mathrm{Id}$, let $g\in\Lip_{0,\mathcal{N}}(\H^d)$ be given. It is enough to check that $\Psi\circ\Phi (g)(x)= g(x)$ for all $x$ in the interior of $P_k$ and for all $k$. Since $\chi_n^{-1}((\Phi(g))_k)(x)$ for $k\neq n$ is only nonzero if $x\in H_{n}^+$ and within $\e$-distance of $H_n$ and there are at most $d$ hyperplanes $H_{n_1},\ldots, H_{n_d}$ with this property there are numbers $M,N\in\N$ such that 
\begin{align*}
    (\Psi(\Phi(g)))(x) &= \sum_{m=1}^{M} (E_{m}((\Phi(g))_{m}))(x) = \sum_{m=1}^{M} (\chi_{1,\ldots,N}^{-1})(\Phi(g))_{m})(x)\\
    &= \chi_{1,\ldots,N}^{-1}\Big(\sum_{m=1}^{M} \mathbbb_{\mathrm{int}\,P_{m}} \chi_{1,\ldots,N}(g)\Big)(x) \\
    &= \chi_{1,\ldots,N}^{-1}\chi_{1,\ldots,N}\Big(\sum_{m=1}^{M} \mathbbb_{\mathrm{int}\,P_{m}} g\Big)(x) = g(x)
\end{align*}
since we may interpret the sum $\sum_{m=1}^{M} \mathbbb_{\mathrm{int}\,P_{m}} \chi_{1,\ldots,N}(g)$ locally around $x$ as a representation of $\chi_{1,\ldots,N}(g)$ as a function similar to the previous sections and the operator $\chi_{1,\ldots, N}$ and its inverse are defined pointwise and only depend on `nearby' values of the function.
\end{proof}

\subsection{Conclusion of the proof and consequences}\label{ssc:MainProofEnd}
In this section we use the arguments of the previous sections and combine them with the results of Section~\ref{sec:local result} to prove our main result. Moreover we state a number of direct consequences of the main result and compare it to the case of the space of bounded Lipschitz functions $\Lip(\H^d)$.

\begin{theorem}
   For $d=2,3,4$ we have 
   \[
   \Lip_0(\H^d)\simeq\Big(\Lip_0(\mathcal N) \oplus \bigoplus_{m\in \N} \Lip_0(P_m)\Big)_{\ell_\infty}
   \]
   and 
   \[
   \F(\H^d)\simeq \Big(\F(\mathcal N) \oplus \bigoplus_{m\in \N} \F(P_m)\Big)_{\ell_1}.
   \]
\end{theorem}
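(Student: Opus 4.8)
The plan is to assemble the main theorem by chaining together the isomorphisms established throughout Section~\ref{sec:main_result}, and then dualising. First I would record the decomposition obtained at the start of Section~\ref{ssec:Decomposition}: Lemma~\ref{lem:extension-operator-net} yields a \wtow continuous isomorphism
\[
\Lip_0(\H^d)\simeq \Lip_0(\mathcal{N}) \oplus \Lip_{0,\mathcal{N}}(\H^d),
\]
via $f\mapsto(f|_\mathcal{N},\,f-E_{\mathcal{N}}(f|_\mathcal{N}))$, with inverse $(g,h)\mapsto E_{\mathcal{N}}(g)+h$. Next, Proposition~\ref{prop:PhiIsIsomorphism} shows that $\Phi$ is a \wtow continuous isomorphism
\[
\Lip_{0,\mathcal{N}}(\H^d)\simeq \Big(\bigoplus_{m\in\N}\Lip_{0,S_m}(P_m)\Big)_{\ell_\infty},
\]
since $\Psi$ is its two-sided inverse and both are bounded. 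Combining these two gives
\[
\Lip_0(\H^d)\simeq \Lip_0(\mathcal{N}) \oplus \Big(\bigoplus_{m\in\N}\Lip_{0,S_m}(P_m)\Big)_{\ell_\infty}.
\]

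The remaining gap is that the summands are $\Lip_{0,S_m}(P_m)$ rather than $\Lip_0(P_m)$, and I would close it using the local result from Section~\ref{sec:local result}. Proposition~\ref{prop:polygons-with-and-without-edge-isomorphic} furnishes, for each $m$, a \wtow continuous isomorphism $\Lip_0(P_m)\simeq \Lip_{0,S_m}(P_m)$ (recalling that the hyperbolic tiles are bi-Lipschitz to Euclidean polytopes via Lemma~\ref{lem:Beltrami-Klein-bilipschitz}, so the Euclidean statement transfers). The one subtlety here, which I expect to be the main obstacle, is \emph{uniformity}: to glue these isomorphisms into a single isomorphism of the $\ell_\infty$-sums, the norms of the isomorphisms and of their inverses must be bounded independently of $m$. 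This is guaranteed because all the tiles $P_m$ are mutually isometric, so each isomorphism $\Lip_0(P_m)\simeq\Lip_{0,S_m}(P_m)$ is (up to composing with the fixed isometries of the tiling) one and the same map on one fixed model polytope; in particular the possible sets $S_m$ range over a finite list of unions of faces of the model polytope, giving finitely many isomorphism constants and hence a uniform bound. With uniformity in hand, the diagonal operator $\bigoplus_m$ of these isomorphisms is a \wtow continuous isomorphism between the two $\ell_\infty$-sums, and substituting yields the first displayed formula of the theorem.

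For the statement about $\F(\H^d)$, I would invoke the \wtow continuity that has been tracked at every stage. As recalled in Section~\ref{sec:preliminaries}, a \wtow continuous isomorphism between $\Lip_0$-spaces is precisely the adjoint of an isomorphism between the preduals. Thus the composite isomorphism
\[
\Lip_0(\H^d)\simeq \Lip_0(\mathcal{N}) \oplus \Big(\bigoplus_{m\in\N}\Lip_0(P_m)\Big)_{\ell_\infty}
\]
is the adjoint of an isomorphism of the corresponding preduals. Since the predual of an $\ell_\infty$-sum of dual spaces $X_m^*$ is the $\ell_1$-sum of the preduals $X_m$, and the predual of $\Lip_0(P_m)$ is $\F(P_m)$ while the predual of $\Lip_0(\mathcal{N})$ is $\F(\mathcal{N})$, taking preduals turns the $\oplus$ and $\ell_\infty$-sum into $\oplus$ and $\ell_1$-sum respectively. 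This gives
\[
\F(\H^d)\simeq \Big(\F(\mathcal{N}) \oplus \bigoplus_{m\in\N}\F(P_m)\Big)_{\ell_1},
\]
completing the proof. The only care needed is to confirm that every individual isomorphism used (from Lemma~\ref{lem:extension-operator-net}, Proposition~\ref{prop:PhiIsIsomorphism}, and Proposition~\ref{prop:polygons-with-and-without-edge-isomorphic}) was explicitly verified to be \wtow continuous, which indeed has been done in each case.
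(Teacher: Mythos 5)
Your proposal is correct and follows essentially the same route as the paper's own proof: the identical chain of isomorphisms (Lemma~\ref{lem:extension-operator-net} for the net, Lemma~\ref{lem:Phi well def} and Proposition~\ref{prop:PhiIsIsomorphism} for $\Phi$, then Proposition~\ref{prop:polygons-with-and-without-edge-isomorphic} transferred via Lemma~\ref{lem:Beltrami-Klein-bilipschitz}), the same uniformity observation resting on the tiles being mutually isometric, and the same passage to preduals via weak$^*$-to-weak$^*$ continuity. Your justification of the uniform bound (the pairs $(P_m,S_m)$ fall into finitely many isometry classes because each $S_m$ is one of finitely many unions of faces of the model polytope) is in fact slightly more explicit than the paper's one-line parenthetical remark.
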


\begin{proof}
    We consider the mapping
    \[
    \Lip_0(\H^d) \to \Lip_0(\mathcal{N}) \oplus \Big(\bigoplus_{m\in\N}\Lip_{0,S_m}(P_m)\Big)_{\ell_\infty}\colon \qquad f\mapsto \big(f|_{\mathcal{N}}, \Phi(f-f|_\mathcal{N})\big)
    \]
    which by Lemma~\ref{lem:extension-operator-net}, Lemma \ref{lem:Phi well def}, and Proposition~\ref{prop:PhiIsIsomorphism} is a \wtow continuous isomorphism. By Lemma~\ref{lem:Beltrami-Klein-bilipschitz}, $P_m$ is bi-Lipschitz equivalent to the same polytope $P_m'$ considered in $\R^d$. Combining this with Proposition~\ref{prop:polygons-with-and-without-edge-isomorphic} implies that $\Lip_{0,S_m}(P_m)$ and $\Lip_0(P_m)$ are \wtow isomorphic, with a uniform bound on the distortion (because the polytopes $P_m$ are mutually isometric). This gives the first of the claims above and shows that the isomorphisms are also \wtow continuous. Hence we may pass to the preduals and arrive at the second claim.
\end{proof}

\begin{corollary} Let $d=2,3,4$ and $\mathcal{M}$ be any net in $\H^d$. Then $\F(\H^d)$ is isomorphic to $\F(\mathcal{M})\oplus \F(\R^d)$.
\end{corollary}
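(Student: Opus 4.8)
The plan is to reduce the corollary to the Theorem just proved. By the Theorem, for $d=2,3,4$ we have the explicit isomorphism
\[
\F(\H^d)\simeq \Big(\F(\mathcal N)\oplus \bigoplus_{m\in\N}\F(P_m)\Big)_{\ell_1},
\]
where $\mathcal N$ is the specific net coming from the tiling and the $P_m$ are mutually isometric copies of a single polytope. Since the polytopes are all isometric, the $\ell_1$-sum $\big(\bigoplus_{m}\F(P_m)\big)_{\ell_1}$ is just the $\ell_1$-sum of countably many copies of $\F(P_1)$; by Lemma~\ref{lem:Beltrami-Klein-bilipschitz} this polytope is bi-Lipschitz equivalent to a genuine Euclidean polytope $P'\subset\R^d$ with nonempty interior, so $\F(P_1)\simeq\F(P')$. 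Invoking the result of~\cite{CKK2016} (quoted in the Introduction) that $\F(P')\simeq\F(\R^d)$, together with the fact established in the proof of Proposition~\ref{prop:polygons-with-and-without-edge-isomorphic} that $\F(P')$ is isomorphic to its own $\ell_1$-sum, I would conclude that
\[
\Big(\bigoplus_{m\in\N}\F(P_m)\Big)_{\ell_1}\simeq \Big(\bigoplus_{\N}\F(\R^d)\Big)_{\ell_1}\simeq \F(\R^d).
\]
Combining these gives $\F(\H^d)\simeq\F(\mathcal N)\oplus\F(\R^d)$ for the particular net $\mathcal N$ of the tiling.

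The remaining task, and the genuine content of the corollary, is to upgrade from this specific net $\mathcal N$ to an \emph{arbitrary} net $\mathcal M$ in $\H^d$. The key point is that the free space over a net in $\H^d$ depends on the net only up to isomorphism. I would argue that any two nets $\mathcal M,\mathcal M'$ in $\H^d$ are bi-Lipschitz equivalent (as metric spaces), so that $\F(\mathcal M)\simeq\F(\mathcal M')$; a net is by definition a separated and coarsely dense subset, and in a proper geodesic space with bounded geometry such as $\H^d$ any two such sets are bi-Lipschitz equivalent. Alternatively, and perhaps more robustly, I would invoke the result of Doucha--Kaufmann~\cite{DK2022} (cited in the Introduction as providing a Schauder basis for $\F(\mathcal N)$) to the effect that nets in $\H^d$ yield isomorphic free spaces; this is the cleanest route since it is a black-box statement tailored to exactly this situation.

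Assembling the pieces, for an arbitrary net $\mathcal M$ I would write $\F(\mathcal M)\simeq\F(\mathcal N)$ by the previous paragraph, and then
\[
\F(\H^d)\simeq\F(\mathcal N)\oplus\F(\R^d)\simeq\F(\mathcal M)\oplus\F(\R^d),
\]
which is exactly the assertion. The main obstacle I anticipate is justifying the net-independence $\F(\mathcal M)\simeq\F(\mathcal N)$ cleanly: one must verify that the net axioms (separation and covering radius) genuinely force a bi-Lipschitz equivalence between $\mathcal M$ and $\mathcal N$, rather than merely a coarse equivalence, since Lipschitz-free spaces are not coarse invariants. This is where the bounded geometry of $\H^d$ is essential, and I would either supply a short direct construction of the bi-Lipschitz map between the two nets, or else quote the appropriate statement from~\cite{DK2022} to sidestep the verification entirely.
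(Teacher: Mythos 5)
Your treatment of the polytope part is correct and coincides with the paper's own argument: starting from the Theorem, Lemma~\ref{lem:Beltrami-Klein-bilipschitz} makes each $P_m$ bi-Lipschitz equivalent to a Euclidean polytope with nonempty interior, and then $\F(P_1)\simeq\F(\R^d)$ together with the stability of $\F(\R^d)$ under countable $\ell_1$-sums (the paper quotes \cite[Corollary~3.5]{K2014} and \cite[Theorem~3.1]{K2014} at exactly this point) collapses $\big(\bigoplus_{m}\F(P_m)\big)_{\ell_1}$ to $\F(\R^d)$.

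The gap lies precisely in the step you yourself single out as the genuine content of the corollary, namely $\F(\mathcal M)\simeq\F(\mathcal N)$ for an \emph{arbitrary} net $\mathcal M$. Your proposed justification --- that in a proper geodesic space with bounded geometry any two separated, coarsely dense subsets are bi-Lipschitz equivalent --- is a false general principle: $\R^d$ is such a space, yet by well-known results of Burago--Kleinberg and of McMullen there exist separated nets in the Euclidean plane that are \emph{not} bi-Lipschitz equivalent. The bi-Lipschitz equivalence of all nets in $\H^d$ is a genuine theorem whose proof rests on hyperbolicity (non-amenability), not on bounded geometry; this is Bogopolskii's theorem \cite{Bo}, which is exactly what the paper invokes. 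Your fallback citation is also off target: \cite{DK2022} is used in the paper only to provide a Schauder basis for $\F(\mathcal N)$ and does not contain the net-independence statement you need. The correct black-box alternative, which the paper records in the remark following the corollary, is the result of \cite{HN17}: if two nets in a metric space both have cardinality equal to the density of the space, then their Lipschitz-free spaces are isomorphic. To repair your proof, replace the bounded-geometry claim by a citation of \cite{Bo} (or of \cite{HN17}); the option of supplying ``a short direct construction'' of the bi-Lipschitz map is not realistic, as it would amount to reproving Bogopolskii's theorem.
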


\begin{proof}
    As proved by Bogopolskii in~\cite{Bo}, all nets in $\H^d$ are bi-Lipschitz equivalent, thus $\F(\mathcal M)\simeq\F(\mathcal N)$, where $\mathcal N$ is the net from the above theorem. Moreover, by Lemma~\ref{lem:Beltrami-Klein-bilipschitz} $P_1$ is bi-Lipschitz equivalent to a polytope in $\R^d$ (with non-empty interior), so $\F(P_1)\simeq\F(\R^d)$ due to~\cite[Corollary~3.5]{K2014}. Finally, each $\F(P_n)$ is isometric to $\F(P_1)$, thus $(\bigoplus_n \F(P_n))_{\ell_1} \simeq (\bigoplus_n \F(\R^d))_{\ell_1} \simeq F(\R^d)$, where the last isomorphism follows from~\cite[Theorem~3.1]{K2014}.
\end{proof}

\begin{remark} Alternatively, instead of~\cite{Bo}, we could have used the following result from~\cite{HN17}: 
if $\mathcal{N}$ and $\mathcal{M}$ are nets in a metric space, both of cardinality the density of the metric space, then
$\F(\mathcal{N})\simeq \F(\mathcal{M})$.
\end{remark}

\begin{corollary}
    For $d=2,3,4$, $\F(\H^d)$ has a Schauder basis.
\end{corollary}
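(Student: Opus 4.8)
The plan is to obtain the statement directly from the structural decomposition already in hand, reducing it to two facts recalled in the Introduction together with one elementary observation about direct sums. First I would invoke the preceding corollary, which yields
\[
\F(\H^d) \simeq \F(\mathcal{N}) \oplus \F(\R^d)
\]
for the net $\mathcal{N}$ coming from the tiling. Since the property of admitting a Schauder basis is an isomorphic invariant, it suffices to exhibit a Schauder basis on the right-hand side.

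For the two summands I would simply cite the literature. The space $\F(\R^d)$ has a Schauder basis by the theorem of H\'ajek and Perneck\'a \cite{HP2014}, and $\F(\mathcal{N})$ has a Schauder basis by the result of Doucha and Kaufmann \cite{DK2022}; both facts are recorded in the Introduction. It therefore remains to check the standard fact that a direct sum of two Banach spaces each possessing a Schauder basis again possesses one.

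To verify this last point I would use interleaving. If $(x_j)_{j\in\N}$ and $(y_j)_{j\in\N}$ are Schauder bases of $X$ and $Y$ respectively, I consider the sequence
\[
(x_1,0),\ (0,y_1),\ (x_2,0),\ (0,y_2),\ \dots
\]
in $X\oplus Y$. Given $(x,y)$, writing $x=\sum_j a_j x_j$ and $y=\sum_j b_j y_j$, the partial sums of the associated series are of the form $\big(\sum_{j\le k}a_j x_j,\ \sum_{j\le \ell}b_j y_j\big)$ with $\ell\in\{k-1,k\}$, so they converge to $(x,y)$, and uniqueness of the coefficients follows coordinatewise from the uniqueness of the expansions in $X$ and $Y$. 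Applying this with $X=\F(\mathcal{N})$ and $Y=\F(\R^d)$ completes the argument. I do not anticipate any real obstacle here: the whole weight of the paper has already been placed in the isomorphism of the previous corollary, and what is left is the assembly of two citations with a routine interleaving argument; the only point requiring the slightest care is confirming that interleaving (rather than concatenation) is what produces convergent partial sums.
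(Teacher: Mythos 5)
Your proof is correct and follows essentially the same route as the paper: invoke the isomorphism $\F(\H^d)\simeq\F(\mathcal{N})\oplus\F(\R^d)$ from the preceding corollary, then cite \cite{DK2022} for $\F(\mathcal{N})$ and \cite{HP2014} for $\F(\R^d)$. The only difference is that you spell out the standard interleaving argument for direct sums, which the paper simply takes for granted.
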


\begin{proof}
    We will show that $\F(\mathcal{N})\oplus \F(\R^d)$ has a Schauder basis. $\F(\mathcal{N})$ does as proved by Doucha and Kaufmann in~\cite{DK2022}, whereas $\F(\mathbb{R}^d)$ does by~\cite{HP2014}. Thus, their direct sum has a Schauder basis as well.
\end{proof}

\begin{remark}\label{rem:Lip Hd}
Note that a similar construction using a tessellation with cubes works in $\mathbb{R}^d$. Additionally, if we look at the spaces of \emph{bounded} Lipschitz functions $\Lip(\R^d)$ and $\Lip(\H^d)$ rather than the ones which are zero at a given base point, then we can use an analogous construction to the one we employ in Section~\ref{ssec:Decomposition} and Section~\ref{ssec:ExtensionFromTiles} to decompose any function $f\in\Lip(\H^d)$ and any $g\in\Lip(\R^d)$ into a sequence of bounded Lipschitz functions $(f_n)_{n\in\N}$ and $(g_n)_{n\in\N}$ on the tiles $(P_n)_{n\in\N}$ tessellating $\H^d$ and the cubes $(Q_n)_{n\in\N}$ tesselating $\R^d$, respectively. Since the functions in $\Lip(\mathbb{R}^{d})$ and $\Lip(\mathbb{H}^d)$ are already bounded and functions in $\Lip(M)$ do not require a base point within the domain on which they are zero, removing the values on a net is not needed in this case. Thus, for $d=2,3,4$,
\begin{gather*}
\Lip(\mathbb{H}^d) \simeq \Big( \bigoplus_{n\in \mathbb{N}} \Lip_{S_n}(P_n)\Big)_{\ell_\infty} \qquad\mbox{and}\qquad \Lip(\mathbb{R}^d) \simeq \Big(\bigoplus_{n\in \mathbb{N}} \Lip_{T_n}(Q_n) \Big)_{\ell_\infty},
\end{gather*}
where $S_n$ is defined as in Definition \ref{eq:Sm} and $T_n$, analogously, is the union of all boundary (hyper-)surfaces of the (hyper-)cubes $Q_n$ which are supported by a hyperplane which does not separate $Q_n$ from the origin $0_{\R^d}$. Equivalently, the union of all faces of $Q_n$ which contain the point(s) of $Q_n$ that is (are) furthest from the origin.

From this we are able to obtain that $\Lip(\mathbb{H}^d)\simeq \Lip(\mathbb{R}^d)$, this time in a completely explicit manner.

Indeed, since $S_n$ is the part of the boundary of $P_n$ which is invisible from the origin, i.e. the geodesics connecting a point from $S_n$ with the origin intersect the polytope in other points, an elementary geometric argument shows that it is simply connected. Similarly, $T_n$ is a simply connected subset of $Q_n$. It follows that one can find explicit bi-Lipschitz mappings between any pair $(P_n,S_n)$ and $(Q_m,T_m)$. The only two tiles for which this is not true are the two tiles $P_1\subset \H^d$ and $Q_1\subset\R^d$ which contain their spaces' respective base points $0$, since those are the only two tiles whose `boundary condition' $S_n$ is the entire boundary $\partial P_1$ and $\partial Q_1$, respectively. However, this simply means that we map $P_1$ to $Q_1$ in a bi-Lipschitz way, and then carry on with the rest of the sequence arbitrarily since any other $(P_n,S_n)$ can be bi-Lipschitz mapped to any, say, $(Q_m,T_m)$.

Finally, there are only finitely many different pairs $(P_n,S_n)$, which are not pairwise congruent (and similarly finitely many pairs $(Q_m,T_m)$). Therefore, these bi-Lipschitz maps have uniformly bounded distortion, hence induce and isomorphism $\Lip(\mathbb{H}^d)\simeq \Lip(\mathbb{R}^d)$.
\end{remark}

\begin{remark}\label{rem:Lip Sd}
The isomorphism $\Lip(\mathbb{S}^d)\simeq\Lip(\R^d)$ can be deduced from the corresponding isomorphism for the $\Lip_0$-spaces. First note that since $\mathbb{S}^d$ has a finite diameter the space $\Lip_0(\mathbb{S}^{d})$ is a hyperplane in $\Lip(\mathbb{S}^d)$. Moreover we have
\[
\Lip_0(\mathbb{S}^d) \simeq \Lip_0(\R^d) \simeq \Lip_0([0,1]^d)
\]
by~\cite{AACD} or~\cite{FG2022}. As for the sphere, the space $\Lip_0([0,1]^d)$ is a hyperplane in $\Lip([0,1]^d)$. By \cite[Theorem~5]{CDW2016} all the $\Lip_0$-spaces above and hence also all spaces of bounded Lipschitz functions contain a (complemented) copy of $\ell_\infty$; hence, they are isomorphic to their hyperplanes. Therefore we may conclude that $\Lip(\mathbb{S}^{d}) \simeq \Lip([0,1]^d)$. Using Proposition~\ref{prop:polygons-with-and-without-edge-isomorphic} together with an argument similar to the one in Remark~\ref{rem:Lip Hd}, we obtain that $\Lip(\mathbb{S}^{d})\simeq \Lip(\R^d)$. In contrast to the case of the hyperbolic space, this argument works for arbitrary~$d$.
\end{remark}

\textbf{Acknowledgements.} The authors wish to thank Professors Martin Pfurner and Hans-Peter Schr\"{o}cker for interesting discussions on geometric aspects of this article. We are also grateful to the anonymous referee for the helpful report. The research of C.~Bargetz and F.~Luggin was supported by the Austrian Science Fund~(FWF): I~4570-N. F.~Luggin gratefully acknowledges the support by the Vice-Rectorate for Research at the University of Innsbruck in the form of a PhD scholarship.


\end{document}